\newtheorem{theorem}{Theorem}
\newtheorem{lemma}[theorem]{Lemma} 
\newtheorem{proposition}[theorem]{Proposition}
\newtheorem{corollary}[theorem]{Corollary} 
\theoremstyle{definition}  
\newtheorem{defn}[theorem] {Definition}
\newtheorem*{example}{Example}
\newtheorem*{remark}{Remark} 
\theoremstyle{remark}
\newcommand{\R}{\mathbb R}
\newcommand{\C}{\mathbb C}
\providecommand{\scal}[2]{\langle#1,#2\rangle}
\newcommand{\cA}{{\mathcal A}}
\newcommand{\cD}{{\mathcal D}}
\newcommand{\cF}{{\mathcal F}}
\newcommand{\cI}{{\mathcal I}}
\newcommand{\cQ}{{\mathcal Q}}
\newcommand{\cS}{{\mathcal S}}
\newcommand{\cW}{{\mathcal W}}
\newcommand{\D}{{\rm d}}
\newcommand{\E}{{\rm e}}
\begin{document}

\title{Radon transform intertwines shearlets and wavelets}

\author{F.~Bartolucci}
\address{F.~Bartolucci, Dipartimento di Matematica, Universit\`a di Genova,  Via
  Dodecaneso 35, Genova,   Italy  }
\email{bartolucci@dima.unige.it}

\author{F.~De~Mari}
\address{F.~De~Mari, Dipartimento di Matematica, Universit\`a di Genova,  Via
  Dodecaneso 35, Genova,   Italy  }
\email{demari@dima.unige.it}
\author{E.~De Vito}
\address{E.~De Vito ,Dipartimento di Matematica, Universit\`a di Genova,  Via
  Dodecaneso 35, Genova,   Italy  }
\email{devito@dima.unige.it}

\author{F.~Odone}
\address{F.~Odone, Dibris, Universit\`a di Genova,  Via
  Dodecaneso 35, Genova,   Italy  }
\email{odne@unige.it}

\begin{abstract}
We prove that the unitary affine Radon transform intertwines the quasi-regular representation of a
class of semidirect products, built by shearlet dilation groups and translations, and the tensor product 
of a standard wavelet representation with a wavelet-like representation. This yields a formula for 
shearlet coefficients that involves only integral transforms applied to the affine Radon transform of the signal,
thereby opening new perspectives in the inversion of the Radon transform.
\end{abstract}

\keywords{shearlets; wavelets; Radon transform}


\maketitle

\section{Introduction} 

The use of wavelets in signal analysis and computer vision has proved
almost optimal for one-dimensional signals in many ways, and the
mathematics behind classical wavelets has reached a high degree of
elaboration. In higher dimensions, however, the picture is less
  clear and  this partially explains the huge class of
representations that has been introduced over the years to handle high
dimensional problems, such as directional wavelets \cite{anmu96},
ridgelets \cite{cado99}, curvelets \cite{cado04}, wavelets with
composite dilations \cite{gulaLiwewi04}, contourlets \cite{dove05},
shearlets \cite{lalikuwe05}, reproducing groups of the symplectic
group \cite{grsa08}, Gabor ridge functions \cite{grsa08} and mocklets
\cite{dede2013} -- to name a few.

Among them, shearlets stand out because of their ability to efficiently capture
anisotropic features, to provide optimal sparse representations, to
detect singularities and to be stable against noise,
see \cite{kula12} for an overview and a complete list of references.
From the purely mathematical  perspective, their construction is based on 
the well-established theory of  square-integrable representations
\cite{dumo76}, just as wavelets are,
and because of this many powerful mathematical tools are available. As far as
applications are concerned, their effectiveness has been tested primarily  in image 
processing, where many efficient algorithms have been designed using them
(see \cite{kula12,duodde15} and  the website
{\tt http://www.shearlab.org/} for further details and references).

Thus, in some sense, shearlets behave for high-dimensional signals as wavelets do for 1D-signal, 
 and it is therefore natural to try to understand if the many strong connections are
 a consequence of some general mathematical principle. 
 
The purpose of this paper is to address this issue, and to give a partial
answer, showing  that  the link
between the shearlet transform and wavelets is  the unitary Radon transform
in affine coordinates,
because 
it actually intertwines the shearlet representation 
with a tensor product of two wavelet representations. This fact can be exploited
to show that by carefully choosing
the mother shearlet it is possible to obtain the classical shearlet coefficients as 
a sequence of operations performed on the 
Radon transform of the signal, 
namely  a  one-dimensional wavelet transform, with respect to the ``sliding'' coordinate
that parametrizes all the hyperplanes parallel to a given one (for the two dimensional case see Fig.~\ref{fig:radonaffine}),  followed by a convolution 
operator with a scale-dependent filter in the variables of the hyperplane. As the shearlet transform admits an inversion formula, it is in principle possible to invert the Radon transform of a given signal by means of it and
the aforementioned operations.

For two-dimensional signals, our results, which have been announced
in \cite{efff17}, can be described as we now explain. In order to
formulate them precisely, we recall the definition of the three main
ingredients, namely wavelets, shearlets and the Radon
transform.  The wavelet group is $\R\rtimes\R^\times$ with law
$(b,a)(b',a')=(b+ab',aa')$.  The square integrable
wavelet representation $W$ acts on $L^2(\R)$ by
\[
W_{b,a}\psi(x) = |a|^{-1/2} \psi\bigl(\frac{x-b}{a}\bigr)
\]
and the wavelet transform, defined by
$ \mathcal W_\psi f(b,a)=\scal{f}{\psi_{b,a}}$,
is a  multiple of  an isometry  from $L^2(\R)$ to $L^2(\R\rtimes\R^\times)$ provided that $\psi$
satisfies the Calder\'on condition,  see~\eqref{calderon} below.
Next, denote by 
${\mathbb S}$ the (parabolic) shearlet group, namely $\R^2\rtimes(\R\rtimes\R^\times)$ with multiplication
\[ 
(b,s,a)(b',s',a')  = (b+N_sA_ab',s+ |a|^{1/2} s',aa')\]
where
\begin{alignat*}{1}
A_a & = a
    \begin{bmatrix}
      1 & 0 \\
      0 & |a|^{- 1/2} 
    \end{bmatrix},
    \qquad
    \quad N_s=
    \begin{bmatrix}
      1 & -s \\
      0 & 1
    \end{bmatrix}
  \end{alignat*}
and where the vectors are understood as column vectors. 
The group ${\mathbb S}$ acts on $L^2(\R^2)$ via the shearlet representation, namely 
    \[ 
    S_{b,s,a}f\,(x)=
    |a|^{- 3/4}f(A_{a}^{-1} N_{s}^{-1}(x-b)).
    \]
The shearlet transform  is then  
$\cS_\psi f(b,s,a)= \scal{f}{S_{b,s,a}\psi}$, and
is a multiple of an isometry provided that an admissibility
condition  on $\psi$ is satisfied  \cite{dahlke2008,kula09},
  see~\eqref{eqn:admvect} below. 
Finally, the Radon transform  in affine coordinates  of  a signal
$f\in L^1(\R^2)$ is the function  
${\mathcal R}^{\rm aff} f:\R^2\to \C$  defined by
\[
{\mathcal R}^{\rm aff} f (v,t)=  \int\limits_{\R}f(t-v y,y) \,\D y,
\qquad (v,t)\in\R^2.
\] 
An important fact is that it is possible to define a version of ${\mathcal R}^{\rm aff}$  as a unitary map on $L^2(\R^2)$. First, it is necessary to compose it with the Riesz-type operator ${\mathcal I}$ that we now describe.  Its natural domain is  the dense subspace of $L^2(\R^2)$ 
\[
\mathcal D = \Bigl\{ g\in L^2(\R^2):
    \int\limits_{\R^2}
  |\xi_2| \, |\widehat{g}(\xi_1,\xi_2)|^2\D\xi_1\D\xi_2<+\infty   
   \Bigr\}, 
   \]
 where   $\widehat{g}$  denotes the Fourier transform of $g$. The densely
defined, self-adjoint unbounded operator ${\mathcal I}: \mathcal D\to   L^2(\R^2)$ is defined  by
\[
\widehat{(\mathcal I g)} (\xi_1,\xi_2)
= |\xi_2|^{\frac{1}{2}} \,\widehat{g}(\xi_1,\xi_2), \qquad (\xi_1,\xi_2)\in\R^2,
\]
{\em i.e.} a Fourier multiplier in the  second variable. It is not hard to show
that for all $f$ in the dense subspace of $ L^2(\R^2)$ 
\[
\cA= \Bigl\{ f\in L^1(\R^d)\cap L^2(\R^2): 
\int_{\R^2}\frac{|\widehat{f} (\xi_1,\xi_2)|^2}{|\xi_1|}\D\xi_1\D\xi_2<+\infty   \Bigr\}, 
   \]
 the Radon
transform ${\mathcal R}^{\rm aff}  f$ belongs to $\mathcal D$ and that the map
$f \longmapsto \mathcal I {\mathcal R}^{\rm aff}  f $
from $\cA$ to $L^2(\R^2)$ extends to a unitary
map, denoted by $\mathcal Q$, from $L^2(\R^2)$ onto itself. 

In the two dimensional case our main formula reads now

  \begin{equation}
  {\mathcal Q} \, S_{b,s,a} f=
\left( W_{s,|a|^{1/2}}\otimes
      \operatorname{I}\right) W_{(1,\mathbf{v})\cdot b,a}\,  {\mathcal Q}
    f\label{result2D} 
  \end{equation}
where the meaning of the dummy variable $\mathbf{v}$ is
\[
W_{(1,\mathbf{v})\cdot b,a} g(v,t)
= |a|^{-\frac12}  \,g\left(v,\frac{t- (1,v)\cdot b}{a}\right).
\]
Our second most important result is the formula
\begin{equation}
  \label{secondmain2D}
  \mathcal S_\psi f(x,y,s,a) 
 = |a|^{-\frac{3}{4}}  \int\limits_{\R}
\mathcal W_{\chi_1 }
  \big({\mathcal R}^{\rm aff}  f(v,\bullet) \big)(x+v y,a) \overline{ 
  \phi_2 \Big(\frac{v-s}{|a|^{1/2}}\Big)}\,\D v,
\end{equation}
provided that $f\in L^1(\R^d)\cap L^2(\R^d)$ and $\psi$ is of the form
\[
\widehat{\psi}(\xi_1,\xi_2)= \widehat{\psi_1}(\xi_1)\widehat{\psi_2}\Bigl(\frac{\xi_2}{\xi_1}\Bigr).
\]
The $1D$-wavelet  $\chi_1$ and  the $1D$-filter $\phi_2$  are
related to the shearlet admissible vector $\psi$ by the following relations
\begin{alignat}{1}
  \widehat{\chi_1}(\xi_1) & =|\xi_1|\widehat{\psi_1}(\xi_1) \label{eq:1}\\
{\phi_2}(\xi_2/\xi_1) & =\widehat{\psi_2}(\xi_2/\xi_1).\label{eq:2}
\end{alignat}
The first equality shows that $2\pi\chi_1=H\psi_1'$ is the Hilbert transform
$H$ of the weak derivative of $\psi_1$.

Equation~\eqref{secondmain2D} shows that for any signal $f$ in
$L^1(\R^d)\cap L^2(\R^d)$ the shearlet coefficients can be computed by
means of three {\em classical} trasforms. Indeed, in order to obtain 
$ \mathcal S_\psi f(x,y,s,a)$ one can:
\begin{enumerate}[a)]
\item compute the Radon transform ${\mathcal R}^{\rm aff} f(v,t)$ of
  the original signal $f$;
\item apply the  wavelet transform with respect to the variable $t$
  \begin{equation}
    G(v,b,a) = \mathcal W_{\chi_1 } \big({\mathcal R}^{\rm aff} f(v,\cdot)
    \big)(b,a),
    \label{eq:7}
  \end{equation}
where $\chi_1$ is given by~\eqref{eq:1};
\item convolve   the result  with the scale-dependent filter
  \[\Phi_a(v)= \overline{\phi_2
      \left(-\frac{v}{|a|^{1/2}}\right)},\]
where $\phi_2$ is given by~\eqref{eq:2} and the convolution is computed with respect to  the variable $v$,
that is
\[
    \mathcal S_\psi f(x,y,s,a) = \left(G(\bullet ,x+\bullet
      \,y,a)*\Phi_a\right)(s).
\]
\end{enumerate}
Finally, since $S$ is a square-integrable representation, there
is a reconstruction formula, namely 
\begin{equation}
f = \int_{\mathbb S} \mathcal S_\psi f(x,y,s,a)\,\, S_{x,y,s,a}\psi
\,\,\frac{\D x \D y \D s \D a}{|a|^3} ,\label{eq:3}
\end{equation}
where the integral converges in the weak sense. Note that
$\mathcal S_\psi f$ depends on $f$ only through its Radon
transform ${\mathcal R}^{\rm aff} f$, see~\eqref{secondmain2D}. The
above equation allows to 
reconstruct an unknown signal $f$ from its Radon transform by computing
the shearlet coefficients by means of~\eqref{eq:3}.

The fact that the Radon
transform does play a prominent role in this circle of ideas is not new. Indeed,
it is known that ridgelets are constructed via wavelet analysis in the
Radon domain \cite{fast09},  Gabor frames are defined as the
directionally-sensitive Radon transforms \cite{grsa08}, discrete
shearlet frames are used to invert the Radon transform
\cite{coeagula10} and the Radon transform is at the root of the proof
that shearlets are able to detect the wavefront set of a 2D signal
\cite{gr11}. 

Our contribution is to clarify this relation from the point of view of
non-commutative harmonic analysis.  We are actually able to prove a
rather general result, Theorem~\ref{main}, which generalizes
\eqref{result2D} and holds for the class of groups that were
introduced by F\"uhr \cite{fu98,futo16} and that are
known as {\it shearlet dilation groups}.

The paper is organized as follows. In Section~\ref{prelim} we present
in full detail all the various ingredients, namely the groups, the
representations, the Radon transform and the unitary extensions that
need to be defined. In Section~\ref{main} we state and prove the main
results.


\section{Preliminaries} \label{prelim}
\subsection{Notation}
We briefly introduce the notation. We set
$\R^{\times}=\R\setminus\{0\}$. The Euclidean norm of a vector
$v\in\R^d$ is denoted by $|v|$ and its scalar product with $w\in\R^d$ 
by $v\cdot w$. For any $p\in[1,+\infty]$ we denote
by $L^p(\R^d)$ the Banach space of functions $f:\R^d\rightarrow\C$,
which are $p$-integrable with respect to the Lebesgue measure $\D x$
and, if $p=2$, the corresponding scalar product and norm are
$\langle\cdot,\cdot\rangle$ and $\|\cdot\|$, respectively. 
The Fourier trasform is denoted by $\mathcal F$ both on
$L^2(\R^d)$ and on  $L^1(\R^d)$, where it is 
defined by
\[
\widehat{f}(\xi)=\mathcal F f({\xi}\,)= \int_{\R^d} f(x) \E^{-2\pi i\,
  {\xi}\cdot x } \D{x},\qquad f\in L^1(\R^d).
\] 
If $G$ is a locally compact group, we denote by $L^2(G)$ the Hilbert
space of square-integrable functions with respect to a left Haar
measure on $G$.   We denote the (real) general linear group of size
  $d\times d$ by ${\rm
    GL}(d,\R)$ and by  ${\rm T}(d,\R)$ the closed subgroup of
  unipotent upper triangular  matrices.
 
If $H$ is a closed subgroup of ${\rm GL}(d,\R)$,  the
  semidirect product $G=\R^{d}\rtimes H$ is the product $\R^{d}\times
  H$ with  group operation
\[ 
(b_1,h_1)(b_2,h_2)=(b_1+h_1[b_2],h_1h_2),
\]
where $b_1, b_2\in\R^d$, $h_1,h_2\in H$ and where $h[b]$ is the natural linear  action of  the matrix $h$ on the column vector $b$.

\subsection{Shearlet dilation groups}
In this section we introduce the groups in which we are interested.
This family includes the groups introduced by F\"uhr in
\cite{fu98,futo16}, and called generalized shearlet dilation groups
for the purpose of generalizing the standard shearlet group
introduced in \cite{lalikuwe05,dastte10}.

\begin{defn}\label{GSDG} 
A  {\it shearlet dilation group}   $H<{\rm GL}(d,\R)$ is a subgroup of the form
$H=SD$, where 
\begin{enumerate}
\item[(i)]
$S$ is a Lie subgroup of ${\rm T}(d,\R)$ consisting of  matrices of the form 
\[
\begin{bmatrix}
1& -{^ts}\\0&B(s)
\end{bmatrix}
\]
with $s\in\R^{d-1}$ and $B:\R^{d-1}\to{\rm T}(d-1,\R)$ a smooth map;
\item[(ii)]  $D$ is the one-parameter subgroup of ${\rm GL}(d,\R)$ consisting of the diagonal matrices
\begin{equation}
a\,{\rm diag}(1,|a|^{\lambda_1}, \dots, |a|^{\lambda_{d-1}})
=a\begin{bmatrix}1&0\\0&\Lambda(a)\end{bmatrix}
\label{scale}\end{equation}
as $a$ ranges in $\R^{\times}$. Here  $(\lambda_1,\dots,\lambda_{d-1})$ is a fixed vector in $\R^{d-1}$.
\end{enumerate} 
The group $S$ is called the shearing sugroup of $H$ and $D$ is
called the diagonal complement or scaling subgroup of $H$.
\end{defn}
Several observations are in order. First of all, if one requires
  the shearing subgroup $S$ to be Abelian, then one obtains the class
  introduced by F\"uhr, with a slightly more general definition. This has inspired
  Definition~\ref{GSDG}.  

Since the map $B$ is
  continuous, $S$ is automatically connected, and hence by
  Theorem~3.6.2 in~\cite{raja84}, it is closed and simply
  connected. By  construction the elements of $H$ are of the form  
\begin{equation}
h_{s,a}= h_{s,1}h_{0,a}= 
a\begin{bmatrix}
1&- {^ts}\Lambda(a)\\0& B(s)\Lambda(a)
\end{bmatrix}. 
\label{hsa}
\end{equation}
Furthermore, since the diagonal matrices of ${\rm GL}(d,\R)$  normalize
  ${\rm T}(d,\R)$, then  $H$ is the semidirect product of $S$ and $D$. 
  
 Finally, the assumption that $S$ is a subgroup normalized  by
  $D$ forces the maps
  $B$ and $\Lambda$ to satisfy some equalities. Indeed, since
\[
\begin{bmatrix}
1&-{^tu}\\0&B(u)
\end{bmatrix}
\begin{bmatrix}
1&-{^tv}\\0&B(v)
\end{bmatrix}
=
\begin{bmatrix}
1&{-^t(v+{^tB(v)}u)}\\0&B(u)B(v)
\end{bmatrix},
\]
then $S$ is a group if and only if
\begin{align}
B(0) & =\operatorname{I}_{d-1} \label{BB} \\
B(u)B(v) & =B(v+{^tB(v)}u) \label{BBB} \\
B(u)^{-1}&=B(-{}^tB(u)^{-1}u) \label{BBBB}
\end{align}
for every $u,v\in\R^{d-1}$.   Since
\[
\begin{bmatrix}
1&0\\0&\Lambda(a)
\end{bmatrix}
\begin{bmatrix}
1&-{^ts}\\0&B(s)
\end{bmatrix}
\begin{bmatrix}
1&0\\0&\Lambda(a)^{-1}
\end{bmatrix}
=
\begin{bmatrix}
1&{^t(\Lambda(a)^{-1}s)}\\0&\Lambda(a)B(s)\Lambda(a)^{-1}
\end{bmatrix}
\]
the compatibility of $D$ with $S$ is equivalent to asking for the following condition to hold for all $a\not=0$ and all $s\in\R^{d-1}$:
\begin{equation}
\Lambda(a)B(s)\Lambda(a)^{-1}=B(\Lambda(a)^{-1}s).
\label{compatible}\end{equation}
It follows that $H$ is diffeomorphic as a manifold to $\R^{d-1} \times
\R^{\times}$, so that we can identify the element $h_{s,a}$ with the
pair $(s,a)$.  With this identification the product law amounts to 
\begin{equation}
(s,a) (s',a')=\left(\Lambda(a)^{-1}s'+{^tB(\Lambda(a)^{-1}s')}s,aa'\right).
\label{prodH}
\end{equation}
We stress that, in general, $S$ is not isomorphic as a Lie group 
to  the additive Abelian group $\R^{d-1}$, unless $S$ is the standard
shearlet group introduced in \cite{dastte10}, see the examples below.

\begin{remark}
  It should be clear that a slightly larger class would be obtained by
  allowing for diagonal matrices of the form
  \[ {\rm sign}(a)\,{\rm diag}(|a|^{\mu_0},|a|^{\mu_1}, \dots,
    |a|^{\mu_{d-1}}),
    \qquad
    a\in\R^\times.
  \]
  The case $\mu_0=0$, however, is uninteresting because any shearlet
  dilation group corresponding to this choice never admits admissible
  vectors \cite{futo16,ADDDF17}. But then a simple change of variables
  permits to assume $\mu_0=1$, as we did, and to set
  $\lambda_j=\mu_j-1$.
\end{remark}

\begin{remark} In \cite{futo16} the authors introduce the notion of shearlet dilation group 
by means of structural properties and then prove that in the case when $S$ is Abelian
they can be parametrized as in Definition~\ref{GSDG}.
\end{remark}

We now give three examples. If $S$ is Abelian a full characterization is
provided in \cite{futo16}, see also \cite{ADDDF17} for a connection
with a suitable class of subgroups of the symplectic group.
\begin{example}[The standard shearlet group]
  A possible choice for $B$ is the map $B(s)=\operatorname{I}_{d-1}$, which satisfies
  all the above properties. In this case, $s\mapsto h_{s,1}$ defines a
  group isomorphism between $\R^{d-1}$ and the Abelian group
  $S$. 

Clearly, any choice of the weights
  $\lambda_1,\ldots,\lambda_{d-1}$ is compatible
  with~\eqref{compatible}.  In particular, if we choose  as $D$ the group of matrices 
\[
A_a=a\left[\begin{matrix}1 & 0 \\ 0 & |a|^{\gamma-1}
    \operatorname{I}_{d-1}\end{matrix}\right] \qquad\Longleftrightarrow\qquad
  \Lambda(a)=|a|^{\gamma-1} \operatorname{I}_{d-1} \qquad
a\in\R^{\times}, 
\]
where $\gamma\in\R$ is a fixed parameter, then we obtain the
$d$-dimensional shearlet group, usually denoted  $\mathbb{S}^\gamma$, 
and, often, the parameter $\gamma$ is chosen
to be $1/d$ \cite{dastte10,ddgl15}.

\end{example}
\begin{example}[The Toeplitz shearlet group]
  Another important example arises when $B(s)$ is the Toeplitz matrix
  \begin{equation}
    B(s)=T(\hat{s}) 
    = 
    \begin{bmatrix} 
      1      & -s_1   & -s_2    & \ldots & -s_{d-2} \\
      0      & 1     & -s_1    & -s_2    & \vdots  \\
      \vdots &\ddots & \ddots & \ddots & \vdots  \\
      \vdots &       & \ddots & 1      & -s_1     \\
      0 & \dots & \ldots & 0 & 1
    \end{bmatrix},
    \label{toeplitz}\end{equation}
  where $\hat{s}={^t(s_1,\dots,s_{d-2})}$.  It is easy to see that
  $T(\hat{u})T(\hat{v})=T(\hat{u}\sharp\hat{v})$ where
  \begin{equation*}
    (\hat{u}\sharp \hat{v})_i 
    := 
    u_i+v_i + \sum_{j+k=i}v_j u_k, \quad
    i=1,\ldots d-2
  \end{equation*}
  and that consequently all the equalities in \eqref{BBB} hold. This
  case corresponds to Toeplitz shearlet groups (see~\cite{dddsst17}).

Not all dilation matrices as in \eqref{scale} are compatible with~\eqref{compatible}.
In \cite{futo16} it is shown that 
\begin{equation}
\lambda_k=k\lambda_1, 
\qquad
k=2,\dots,d-1
\label{topeq}\end{equation}
for any fixed $\lambda_1$.
\end{example}

\begin{example}[A non-Abelian shearlet dilation group] The matrices
\[
g(u_1,u_2,u_3)=
\begin{bmatrix}
1&-u_1&-u_2&-u_3\\
0&1&-u_1&-u_2-\frac{1}{2}u_1^2\\
0&0&1&0\\
0&0&0&1
\end{bmatrix}
\]
as $u=(u_1,u_2,u_3)$ ranges in $\R^3$ give rise to a non-Abelian shearlet group $S$. Indeed, it is easily checked that
\[
g(u_1,u_2,u_3)g(v_1,v_2,v_3)
=
g(u_1+v_1,u_2+v_2-u_1v_1,u_3+v_3-u_1(v_2+\frac{1}{2}v_1^2)),
\]
a  product which is not Abelian in the third coordinate. Evidently,
\[
B(u)=
\begin{bmatrix}
1&-u_1&-u_2-\frac{1}{2}u_1^2\\
0&1&0\\
0&0&1
\end{bmatrix}
\]
is a smooth function of $u$. The group $S$ is isomorphic to the standard Heisenberg group, as is most clearly seen at the level of Lie algebra. Indeed, the Lie algebra of $S$ is given by the matrices
\[
X(q,p,t)=
\begin{bmatrix}
0&q&p&t\\
0&0&q&p\\
0&0&0&0\\
0&0&0&0
\end{bmatrix},
\]
because $X^3(q,p,t)=0$ and hence
\begin{align*}
\exp(X(q,p,t))&=\operatorname{I}_4+X(q,p,t)+\frac{1}{2}X^2(q,p,t)\\
&=\begin{bmatrix}
1&q&p+\frac{1}{2}q^2&t+\frac{1}{2}qp\\
0&1&q&p\\
0&0&1&0\\
0&0&0&1
\end{bmatrix}\\
&=g(-q,-(p+\frac{1}{2}q^2),-(t+\frac{1}{2}qp)).
\end{align*}
Further,
\[
[X(q,p,t),X(q',p',t')]=X(0,0,qp'-pq')
\]
exhibits the Lie algebra of $S$ as the three dimensional Heisenberg Lie algebra. A straightforward calculation shows that
for any choice of $\lambda\in\R$ the diagonal matrices
\[
\Lambda(a)=
\begin{bmatrix}
|a|^{\lambda}&&\\
&|a|^{2\lambda}&\\
&&|a|^{3\lambda}
\end{bmatrix}
\]
normalize $B(u)$ because $\Lambda(a)B(u)\Lambda(a)^{-1}=B(\Lambda(a)^{-1}u)$. Conversely, these are easily seen to be the only rank-one dilations that normalize the matrices $B(u)$. In conclusion, the group $D$ consisting of the matrices
\[
a\begin{bmatrix}1&\\&\Lambda(a)\end{bmatrix}
\]
together with $S$ give rise to the non-Abelian shearlet dilation group $H=SD$. It is worth observing that the
dilations in $D$ are not the standard dilations of the Heisenberg group. Indeed, the Lie algebra of $D$ consists of the diagonal matrices 
$A_\lambda(\tau)={\rm diag}(\tau,(\lambda+1)\tau,(2\lambda+1)\tau,(3\lambda+1)\tau)$ and
\[
[A_\lambda(\tau),X(q,p,t)]=X(-\lambda\tau q,-2\lambda\tau p,-3\lambda\tau t)
\]
shows that these homogeneous dilations  are not  
the standard dilations of the Heisenberg Lie algebra 
(see \cite{ST}, p.~620).
\end{example}

\subsection{The shearlet representation and admissible vectors}

From now on we fix a group $G=\R^d\rtimes H$ where $H$
is a shearlet group as in Definition~\ref{GSDG} and we parametrize its elements
as  $(b,s,a)$. 
By~\eqref{prodH}  we get that a left Haar measure of
  $H$ is
\[ \D h= |a|^{\lambda_D-1}\D s\D a \]
where $\lambda_D=\lambda_1+\ldots+\lambda_{d-1}$ and $\D s$, $\D a$
are the Lebesgue measures of $\R^{d-1}$ and $\R^\times$. As a
consequence, a left Haar measure on $G$ is 
\[
\D g={\D b}\,\frac{\D h}{|\det h_{s,a}|} = |a|^{-(d+1)}\D b \D s\D a
\]
where $\D b$ is the Lebesgue measure on~$\R^d$ and the last equality
holds true since 
\[
|\det h_{s,a}|=|a|^{d+\lambda_D}.
\]
The quasi-regular representation of $G$ on $L^2(\R^d)$ is
\begin{equation}
S_{b,s,a}f(x)=|a|^{-\frac{d+\lambda_D}{2}} f(h_{s,a}^{-1}(x-b)).
\label{quasireg2}
\end{equation}

The next result generalizes Theorem 4.12 in \cite{futo16} to the case when $S$ is not Abelian.
\begin{theorem}\label{admvect}
The representation $S$ is square-integrable and its admissible vectors
$\psi$ are the elements of $L^2(\R^d)$ satisfying
\begin{equation}\label{eqn:admvect}
0< C_\psi=\int_{\R^d}\frac{|\cF{\psi}(\xi)|^2}{|\xi_1|^d}\D\xi<+\infty,
\end{equation}
where $\xi=(\xi_1,\xi')\in\R\times\R^{d-1}$.
\end{theorem}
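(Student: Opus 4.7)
The plan is the standard F\"uhr/Duflo--Moore strategy for the quasi-regular representation of $G=\R^d\rtimes H$: reduce the admissibility analysis to a study of the dual action $h\cdot\xi={}^t h\,\xi$ of $H$ on $\R^d$.

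First I would compute the Fourier conjugate of the representation. Using $|\det h_{s,a}|=|a|^{d+\lambda_D}$, a change of variables gives
\[
\widehat{S_{b,s,a}\psi}(\xi) = |a|^{(d+\lambda_D)/2}\,\E^{-2\pi i\xi\cdot b}\,\widehat{\psi}({}^t h_{s,a}\xi).
\]
Viewing $\langle f,S_{b,s,a}\psi\rangle$ as a convolution in $b$ and applying Plancherel in $b$ together with Fubini, the left Haar integral on $G$ factorises as
\[
\int_G |\langle f,S_{b,s,a}\psi\rangle|^2 \D g = \int_{\R^d} |\widehat f(\xi)|^2\,I(\xi)\,\D\xi,
\qquad
I(\xi):=\int_H |\widehat\psi({}^t h\,\xi)|^2\,\D h.
\]
Hence the theorem reduces to showing $I(\xi)=C_\psi$ for a.e.\ $\xi$, where $C_\psi$ is the integral in \eqref{eqn:admvect}.

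Next I would analyse the dual orbit structure. The form \eqref{hsa} of $h_{s,a}$ yields
\[
{}^t h_{s,a}\,\xi=\bigl(a\xi_1,\;-a\xi_1\Lambda(a)s + a\Lambda(a)\,{}^tB(s)\xi'\bigr),
\qquad \xi=(\xi_1,\xi').
\]
At $\xi_0=(1,0)$ this simplifies to $(a,-a\Lambda(a)s)$, so the orbit of $\xi_0$ is the conull open set $\cO=\R^\times\times\R^{d-1}$ and the stabilizer of $\xi_0$ is trivial; thus $H$ acts freely and transitively on $\cO$. Left--invariance of $\D h$ then forces $I$ to be a.e.\ constant: for any $\xi={}^t h_0\xi_0\in\cO$, substituting $h\mapsto h_0 h$ gives
\[
I(\xi)=\int_H |\widehat\psi({}^t(h_0 h)\xi_0)|^2\,\D h=\int_H|\widehat\psi({}^t h\,\xi_0)|^2\,\D h=I(\xi_0).
\]

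Finally I would compute $I(\xi_0)$ by direct change of variables. The map $(s,a)\mapsto\eta=(a,-a\Lambda(a)s)$ has block triangular Jacobian of absolute value $|a|^{d-1+\lambda_D}$, so that $|a|^{\lambda_D-1}\D s\,\D a=|\eta_1|^{-d}\D\eta$, and
\[
I(\xi_0)=\int_{\R^d}\frac{|\widehat\psi(\eta)|^2}{|\eta_1|^d}\,\D\eta=C_\psi.
\]
Any $\psi$ whose Fourier transform is smooth and compactly supported inside $\cO$ then satisfies $0<C_\psi<+\infty$, which simultaneously yields square--integrability of $S$ and the stated characterization of admissible vectors. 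The main obstacle --- and the point where Theorem~\ref{admvect} extends Theorem~4.12 of \cite{futo16} to the non-Abelian case --- is the nonlinear dependence of ${}^t h_{s,a}\xi$ on $s$ through ${}^tB(s)\xi'$, which makes the orbit map hard to invert at a generic $\xi$; the left--invariance step above circumvents this by reducing every computation to $\xi_0=(1,0)$, at which the orbit map is linear in $s$, so that the relations \eqref{BB}--\eqref{BBBB} and \eqref{compatible} enter only through the existence of a left Haar measure of the stated form on $H$.
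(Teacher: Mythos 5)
Your computation is correct and, modulo packaging, it follows the same route as the paper: the paper verifies that the dual orbit of $\xi_0=(1,0,\dots,0)$ is open and conull with trivial stabilizer and then invokes F\"uhr's criterion \cite{fuhr10}, together with the admissibility integral \eqref{eq:4}, while you inline the proof of that criterion (Plancherel in $b$, constancy of $I(\xi)$ on the orbit by left invariance of $\D h$, and the change of variables at $\xi_0$, which is exactly the paper's substitution $a=\xi_1$, $s=\Lambda(\xi_1)^{-1}\xi'/\xi_1$, with the same Jacobian $|a|^{d-1+\lambda_D}$). All of these steps, including the sign in ${}^th_{s,a}\xi_0=(a,-a\Lambda(a)s)$ and the identity $|a|^{\lambda_D-1}\D s\,\D a=|\eta_1|^{-d}\D\eta$, check out.

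There is, however, one genuine gap: irreducibility. In this paper ``square integrable'' means irreducible \emph{and} admitting an admissible vector, and irreducibility does not follow from the identity $\int_G|\langle f,S_{b,s,a}\psi\rangle|^2\,\D g=C_\psi\|f\|^2$ alone. A standard counterexample is $\R\rtimes(0,+\infty)$ acting on $L^2(\R)$ by translations and positive dilations: any $\psi$ whose Calder\'on integrals over the two half-lines $\pm\xi>0$ are finite and equal makes the voice transform a multiple of an isometry, yet the representation is reducible (Hardy space decomposition); there the dual action has two open orbits, neither conull. What rescues the present case is precisely the orbit structure you verified but did not use for this purpose: since the commutant of the restriction of $S$ to the translations consists of Fourier multipliers $m(D)$ with $m\in L^\infty(\R^d)$, commutation with the $S_{0,s,a}$ forces $m\circ{}^th_{s,a}=m$ a.e.\ for every $(s,a)$, and a Fubini argument combined with the free transitive action of $H$ on the conull open orbit $\cO$ makes $m$ a.e.\ constant, so the commutant is trivial and $S$ is irreducible by Schur's lemma. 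Either add this argument, or simply cite F\"uhr's criterion as the paper does (you have already checked its hypotheses, the trivial stabilizer being in particular compact); with that supplement your proof of Theorem~\ref{admvect}, including the extension beyond the Abelian case of \cite{futo16}, is complete.
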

We recall that a unitary representation $\pi$ of $G$ acting on a Hilbert
space $\mathcal H$  is   
{\it square integrable} if it is irreducible and if there exists a (non-zero)
element $\psi\in \mathcal H$, called 
{\it admissible} vector, such that the associated {\it voice
  transform}, {\em i.e.}  the linear map
$f\mapsto\langle f,\pi(b,h)\psi\rangle$,  takes values in
$L^2(G)$ and in such case it is a  multiple of an isometry, denoted  by $\cW_\psi:\mathcal H\to L^2(G)$.
\begin{proof}[Proof of Theorem~\ref{admvect}]
  The proof is an immediate consequence of the following result due to
F\"uhr, see \cite{fuhr10} and the references therein. The
  quasi-regular representation of $\R^d\rtimes H$ is square integrable
  if and only if there exists a vector $\xi_0\in\R^d$ such that
  \begin{enumerate}[(i)]
  \item the dual orbit
    $\mathcal{O}_{\xi_0}=\{{^th}{\xi_0}\in\R^d:h\in H\}$ is open and
    it is of full measure,
  \item  the stabilizer
    $H_{\xi_0}=\{h\in H: {^th}{\xi_0}=\xi_0\}$ is compact,
  \end{enumerate}
  where saying that $\mathcal{O}_{\xi_0}$ has full measure means that
  its complement has Lebsegue measure zero.  In such case, a vector $\psi$ is
  admissible if and only if
  \begin{equation}
    \label{eq:4}
    \int_{H} |\mathcal F\psi({}^th\xi_0)|^2 \D h <+\infty.
  \end{equation}
In our setting, with the choice $\xi_0=(1,0,\ldots,0)$ we have that
  \[ {}^th_{s,a}\xi_0={}^th_{0,a} {}^th_{s,1}
    \begin{bmatrix}
      1\\0\\ \ldots\\ 0
    \end{bmatrix}
    =a\begin{bmatrix} 1 \\ \Lambda(a) s
    \end{bmatrix}
  \]
  so that $\mathcal{O}_{\xi_0} = \R^\times \times\R^{d-1}$, which is of full
  measure, and $H_{\xi_0}$ is trivial. Hence $S$ is square-integrable.

  To compute the admissible vectors, notice that by~\eqref{eq:4}
  \begin{alignat*}{1}
    \int_{H} |\mathcal F\psi({}^th\xi_0)|^2 \D h& =
    \int_{\R^{d-1}\times\R^\times} |\mathcal F\psi(a\Lambda(a)s,a)|^2
    |a|^{\lambda_D-1} \D s \D a \\
    & =\int_{\R^{d-1}\times\R^\times} \dfrac{|\mathcal
      F\psi(\xi_1,\xi')|^2}{|\xi_1|^d} \D \xi_1 \D \xi'
  \end{alignat*}
  with the change of variables $a=\xi_1$ and
  $s=\Lambda(\xi_1)^{-1}\xi'/\xi_1$.
\end{proof}
Theorem~\ref{admvect} states the surprising fact that the
admissibility condition is the same for all generalized shearlet
dilation groups. A canonical choice is to assume that
\begin{equation}
  \label{eq:8}
\mathcal F\psi(\xi_1,\xi') =\mathcal F\psi_1(\xi_1) \mathcal F\psi_2(\xi'/\xi_1)
\end{equation}
where   $\psi_1\in L^2(\R)$ satisfies 
\begin{equation}
  \label{eq:9}
  \int_{\R^\times} \dfrac{|\mathcal
      F\psi_1(\xi_1)|^2}{|\xi_1|} \D \xi_1 <+\infty.
\end{equation}
and $\psi_2\in L^2(\R^{d-1})$.
However, other choices are available and, in particular,  it is
possible to build  shearlets with compact 
support in space \cite{kikuli12}.
We finally recall that, since the representation $S$ is
  square-integrable, we have the weakly-convergent reproducing formula
  \cite{fuhr05} 
\begin{equation}
\label{final}
f=\frac{1}{C_{\psi}}\int_{G}\mathcal{S}_{\psi} f(b,s,a)\,S_{b,s,a}\psi\ \frac{\D b\,\D s\,\D a}{|a|^{d+1}}.
\end{equation}

\subsection{Wavelet Transform}
We recall that the one-dimensional affine group $\mathbb{W}$ is $\R\rtimes\R^{\times}$ endowed with the product
\[
(b,a)(b',a')=(b+ab',aa')
\]
and left Haar measure $|a|^{-2}\D b\D a$. It acts on $L^2(\R)$ by
means of the square-integrable representation 
\[
W_{b,a}f(x)=|a|^{-\frac{1}{2}}f(\frac{x-b}{a}).
\]
The corresponding wavelet transform is $\mathcal{W}_{\psi}:L^2(\R)\rightarrow L^2(\mathbb{W})$, given by
\[
\mathcal{W}_{\psi}f(b,a)=\langle f,W_{b,a}\psi\rangle,
\]
which is a multiple of an isometry provided that $\psi\in L^2(\R)$ satisfies the admissibility condition, namely the Calder\'on equation,
\begin{equation}\label{calderon}
0<\int_{\R}\frac{|\mathcal{F}\psi(\xi)|^2}{|\xi|}\D \xi<+\infty
\end{equation}
and, in such a case, $\psi$ is called an admissible wavelet. 

\subsection{The quasi-regular representation of $H$}
Consider now the shearlet dilation group $H$,
with $S$ and $D$ its shearing and dilation subgroups, respectively (see Definition~\ref{GSDG}).
As mentioned above,  we identify $H$ with $\R^{d-1}\times\R^{\times}$ as manifolds
and sometimes denote by $(s,a)$ the element $h_{s,a}$ of $H$.
Recall that by $\eqref{prodH}$ the product law is then 
\[
(s,a)(s',a')=(\Lambda(a)^{-1}s'+{^tB(\Lambda(a)^{-1}s')}s,aa').
\]
Observe that $H$ acts naturally on $\R^d$ and its (right) dual action
  is 
\[
{}^th_{s,a}
\begin{bmatrix}
  v_1 \\ v
\end{bmatrix} = a \begin{bmatrix}
  v_1 \\  \Lambda(a)({^tB(s)}v-s)
\end{bmatrix}. 
\]
This implies that  $H$ acts naturally on $\mathbb
P^{d-1}=(\R^d\setminus\{0\})/\sim$ as well. By identifying $\R^{d-1}$ with 
$\{(1,v): v\in\R^{d-1}\}/\sim$ we get that $H$ acts on $\R^{d-1}$ as
\[ {}^th_{s,a}. v =   \Lambda(a)({^tB(s)}v-s).\]
Hence we can define the
quasi-regular representation of $H$ acting on $L^2(\R^{d-1})$ by means of 
\[
V_{s,a}f(v)=|a|^{\frac{\lambda_D}{2}}f(\Lambda(a)({^tB(s)}v-s)),
\]
where we recall that $\lambda_D=\lambda_1+\dots+\lambda_{d-1}$.  
In general, $V$ is not irreducible, but we can always define
the voice transform  associated to a fixed vector $\psi\in L^2(\mathbb{R}^{d-1})$, 
namely  the mapping $\mathcal{V}_{\psi}:L^2(\mathbb{R}^{d-1})\longrightarrow C(H)$ defined by 
\[
  \mathcal{V}_{\psi}f(s,a) =\langle f , V_{s,a}\psi\rangle_2,
\]
where $C(H)$ is the space of continuous functions on $H$.

\begin{example}[The standard shearlet group, continued]
  For the classical shearlet group $\mathbb S^\gamma$ the shearlet
  representation on $L^2(\R^d)$ becomes 
  \begin{equation}
S^\gamma_{b,s,a}f(x)=|a|^{-\frac{1+\gamma(d-1)}{2}}f(A_a^{-1}S_s^{-1}(x-b)),\label{eq:12}
\end{equation}
whereas  the group $H$ is the affine group $\R^{d-1}\rtimes \R^\times$ in dimension
  $d-1$ and $V$ is the corresponding wavelet representation
  \begin{equation}
V_{s,a}f(v)=|a|^{\frac{(d-1)(\gamma-1)}{2}}
f\left(\frac{v-s}{|a|^{1-\gamma}}\right),\label{eq:10}
\end{equation}
which is not irreducible unless $d=2$.  Furthermore, the voice
transform can be written as convolution operator
  \[
    \mathcal{V}_{\psi}f(s,a) = |a|^{\frac{(d-1)(\gamma-1)}{2}}  \int_{\R^{d-1}}
    f(v) \overline{\psi\left(\frac{v-s}{|a|^{1-\gamma}}\right) }
    \D v = f * \Psi_a(s)
  \]
  where
  \[\Psi_a(v)=|a|^{\frac{(d-1)(\gamma-1)}{2}} 
    \overline{\psi\left(-\frac{v}{|a|^{1-\gamma}}\right)} .\]
\end{example}

\subsection{The affine Radon transform}\label{affineRadon}
In this section we recall the definition and the main properties of the Radon transform. Then we introduce the particular restriction of the Radon transform in which we are interested, the so-called {\it affine Radon transform}, obtained by parametrizing the space of hyperplanes by affine coordinates.

We first define the Radon transform on $L^1(\R^d)$ by following
  the approach in~\cite{raka96}, see also \cite{helgason99} as a
  classical reference. Given $f\in
  L^1(\R^d)$ its Radon transform is the function $\mathcal{R}f  :(\R^d\setminus\{0\})\times\R\rightarrow\C$
  defined by
\begin{equation}
\mathcal{R}f(n,t) =\frac{1}{|n|}\int_{n\cdot x=t}f(x)\ {\rm d}m(x), \label{radon}
\end{equation}
where $m$ is the Euclidean measure on the hyperplane
\begin{equation}
(n:t):=\{x\in\R^d:n\cdot x=t\}\label{eq:5}
\end{equation}
and the equality~\eqref{radon} holds for almost all $(n,t)\in
(\R^d\setminus\{0\})\times\R$. 
We add some comments. 
Definition~\eqref{radon} makes sense since, given  $n\in
\R^d\setminus\{0\}$ Fubini theorem gives that 
\[
\int_{\R^d}|f(x)|\D x = \int_{\R} \left(\int_{n\cdot x=t}|f(x)|\D
  m(x)\right)\D t <+\infty,
\]  
so that for almost all $t\in \R$ the integral  
$\int_{n\cdot x=t}|f(x)|\D m(x)$ is finite and $\mathcal{R}(n,t)$ is well defined.

Furthermore, each pair
$(n,t)\in(\mathbb{R}^d\setminus\{0\})\times\R$ defines the hyperplane $(n:t)$ 
by means of~\eqref{eq:5}. Clearly, the
correspondence between parameters $(n,t)$ and hyperplanes  is not
bijective. Indeed $(n',t')$ and $(n,t)$ determine the same hyperplane
if and only if there exists $\lambda\in\R^\times$ such that
$n'=\lambda n$ and $t'=\lambda t$ and this equivalence relation
motivates the notation $(n:t)$ for the hyperplane~in~\eqref{eq:5}.
Because of the factor $1/|n|$ in~\eqref{radon}, 
$\mathcal{R}f$ is a positively homogenous function of degree $-1$,
{\em i.e.} for all  $\lambda\in\R^\times$
\begin{equation}
  \label{eq:6}
\mathcal{R}f(\lambda n, \lambda t)=|\lambda|^{-1}\mathcal{R}f(n,t).
\end{equation}
This means  that $\mathcal{R}f$ is completely defined by choosing a representative
$(n,t)$ for each hyperplane $(n:t)$, {\em i.e.} by choosing a suitable system of
coordinates on the affine Grassmannian
\[ \{\text{hyperplanes of } \R^d\}\simeq \mathbb{P}^{d-1}\times\R.\]
The canonical choice \cite{helgason99} is given by parametrizing
$\mathbb{P}^{d-1}$ with its two-fold covering  $S^{d-1}$,  where
$S^{d-1}$ is the unit sphere in $\R^d$.

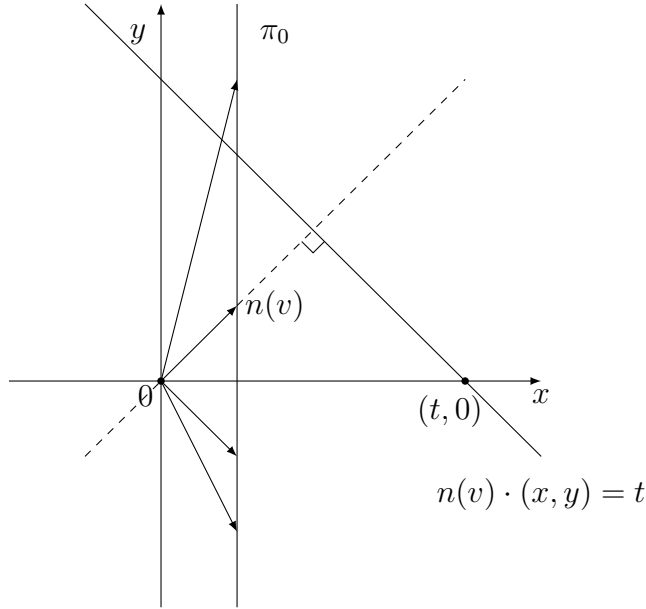
\begin{figure}
\centering
\begin{tikzpicture} [>=latex]
\draw plot [domain=-1:5] (\x, {-\x+4)}); 
\draw [dashed] (-1,-1) - - (0,0); 
\draw [dashed] (1,1) - - (4,4); 
\draw [-] (1,-3) -- (1,5); 
\node at (1.5,4.6) {$\pi_0$}; 
\draw [->] (-2,0) -- (5,0); 
\node at (5,-0.2) {$x$}; 
\draw [->] (0,-3) -- (0,5); 
\node at (-0.3,4.6) {$y$}; 
\fill (0,0) circle (1.4pt); 
\node at (-0.2,-0.2) {0}; 
\node at (1.5,1) {$n(v)$}; 
\draw [->] (0,0) -- (1,1);
\draw [->] (0,0) -- (1,4);
\draw [->] (0,0) -- (1,-1);
\draw [->] (0,0) -- (1,-2);
\node at (5,-1.5) {$n(v)\cdot(x,y)=t$}; 
\node at (3.8,-0.4) {$(t,0)$}; 
\fill (4,0) circle (1.4pt); 
\draw (2,2) coordinate (b);
\draw[anchor=base,color=black]  (b.center) ++(-0.15,-0.15)  -- ++(0.15,-0.15) -- ++(0.15,0.15);
\end{tikzpicture}
\caption{space of hyperplanes parametrized by affine coordinates (2-dimensional case)}
\label{fig:radonaffine}
\end{figure}
We are interested in another restriction of the Radon transform.  For
all $v\in\mathbb{R}^{d-1}$ set $^tn(v)=(1,{^tv})$.
\begin{defn}
Given $f\in L^1(\R^d)$, the {\it affine Radon transform of $f$} is the function
$\mathcal{R}^{\text{aff}}f:\R^{d-1}\times\R\rightarrow\C$
given by
\begin{align}
\label{affine}
\mathcal{R}^{\text{aff}}f(v,t)&=\mathcal{R}f(n(v),t)\nonumber\\
&=\frac{1}{\sqrt{1+|v|^2}}\int_{n(v)\cdot x=t}f(x){\rm d}m(x)=\int_{\mathbb{R}^{d-1}}f(t-v\cdot y,y){\rm d}y.
\end{align}
\end{defn}
\begin{remark}
  The transform $\mathcal{R}^{\rm aff}$ is obtained from $\mathcal{R}$
  by parametrizing the projective space ${\mathbb P}^{d-1}$ with
    affine coordinates. Indeed, the map $(v,t)\mapsto(n(v):t)$ is a
    diffeomorphism of $\R^{d-1}\times\R$ onto the open subset
    \[ 
    U_0=\{(n:t): \exists\lambda\in\R^\times\text{ s.t. } \lambda
      n\in\pi_0\},
      \] 
      where $\pi_0=\{n(v):v\in\mathbb{R}^{d-1}\}$.
      The complement of $U_0$ is the set of {\em
      horizontal hyperplanes},  those for which the normal vector has the first
    component equal to zero (see Figure \ref{fig:radonaffine} for the
    2-dimensional case). The set of pairs $(v,t)$ such that
    $(n(v):t)\not\in U_0$ is negligible, so that
    $\mathcal{R}^{\text{aff}}f$ completely defines $\mathcal Rf$.
    In~\ref{polaff} we recall the relation between the affine Radon
    transform and the usual Radon transform in polar coordinates.
  \end{remark}

The next proposition, whose proof  can be found in \cite{raka96},
summarizes the behaviour of the 
Radon transform under affine linear actions.  The translation and dilation operators act on a
function $f:\R^d\to \C$ as
\[
T_bf(x)=f(x-b),
\qquad
D_Af(x)=|\det A|^{-1}f\left(A^{-1}x\right),
\]
respectively, for $b\in\R^d$ and $A\in\text{GL}(d,\R)$. Both operators
map each $L^p(\R^d)$ onto itself and  $D_A$ is normalized to be
  an isometry on $L^1(\R^d)$.  
\begin{proposition}
\label{radonprop}
Given $f\in L^1(\R^d)$, the following properties hold true:
\begin{enumerate}
\item[(i)] $\mathcal{R}T_{b}f(n,t)=\mathcal{R}f(n,t-n\cdot b)$,\ for all $b\in\mathbb{R}^d$;
\item[(ii)] $\mathcal{R}D_{A}f(n,t)=\mathcal{R}f({^t\!A}n,t)$,\ for all $A\in GL(d,\mathbb{R})$.
\end{enumerate}
\end{proposition}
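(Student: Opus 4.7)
The plan is to derive both identities in parallel from the Fourier slice theorem, which is the most transparent route. First I would establish the slice identity
\[
\int_{\R} \mathcal{R}f(n,t)\, \E^{-2\pi i \tau t}\, \D t = \widehat{f}(\tau n), \qquad \tau \in \R,
\]
for every $n\in\R^d\setminus\{0\}$ and every $f\in L^1(\R^d)$. This follows by applying the coarea-type identity
\[
\int_{\R^d} g(x)\, \D x = \frac{1}{|n|}\int_{\R} \int_{n\cdot x=t} g(x)\, \D m(x)\, \D t
\]
(an immediate consequence of Fubini after rotating to an orthonormal frame with first vector $n/|n|$) to the function $g(x)=f(x)\E^{-2\pi i\tau(n\cdot x)}$, the $L^1(\R)$-regularity of $\mathcal{R}f(n,\cdot)$ being itself a direct Fubini argument as already indicated in the text preceding the proposition.

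Granting the slice identity, part (i) reduces to the standard translation rule $\widehat{T_b f}(\xi)=\E^{-2\pi i\xi\cdot b}\widehat{f}(\xi)$: the one-dimensional Fourier transform of $t\mapsto\mathcal{R}T_b f(n,t)$ at $\tau$ equals $\E^{-2\pi i\tau(n\cdot b)}\widehat{f}(\tau n)$, which is precisely the Fourier transform of $t\mapsto \mathcal{R}f(n,t-n\cdot b)$; injectivity of the Fourier transform on $L^1(\R)$ then yields (i). Part (ii) is entirely analogous using $\widehat{D_A f}(\xi)=\widehat{f}({}^t\!A\xi)$: the slice of $\mathcal{R}D_A f(n,\cdot)$ at $\tau$ becomes $\widehat{f}(\tau\,{}^t\!An)$, which is the slice of $\mathcal{R}f({}^t\!An,\cdot)$, so the two functions of $t$ coincide.

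As an alternative, both identities admit a direct proof by change of variables. For (i), the substitution $y=x-b$ inside the hyperplane integral is a Euclidean isometry mapping $\{n\cdot x=t\}$ bijectively onto $\{n\cdot y=t-n\cdot b\}$ while preserving $\D m$, and the identity drops out. For (ii), the substitution $y=A^{-1}x$ sends $\{n\cdot x=t\}$ onto $\{({}^t\!An)\cdot y=t\}$; the delicate point---and the main obstacle in this direct approach---is the computation of the tangential Jacobian of $A$ restricted from $({}^t\!An)^\perp$ to $n^\perp$. Decomposing $|\det A|$ according to the orthogonal splittings $\R^d=({}^t\!An)^\perp\oplus\R\,{}^t\!An$ in the domain and $\R^d=n^\perp\oplus\R n$ in the codomain, one finds that this tangential factor equals $|\det A|\cdot |n|/|{}^t\!An|$, which combines with the prefactor $|\det A|^{-1}/|n|$ from the definition of $\mathcal R$ to leave exactly $1/|{}^t\!An|$, as required.
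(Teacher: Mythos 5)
Your proposal is correct; note that the paper itself offers no proof of this proposition, simply deferring to Ramm--Katsevich \cite{raka96}, so any argument you give is necessarily ``different'' from the text. Your primary route---establishing the slice identity $\int_\R \mathcal{R}f(n,t)\E^{-2\pi i\tau t}\,\D t=\widehat f(\tau n)$ directly from Fubini in a rotated frame and then reading off (i) and (ii) from the elementary rules $\widehat{T_bf}(\xi)=\E^{-2\pi i\xi\cdot b}\widehat f(\xi)$ and $\widehat{D_Af}(\xi)=\widehat f({}^t\!A\xi)$, with injectivity of the Fourier transform on $L^1(\R)$---is sound, and it has the pleasant side effect of reproving the paper's Proposition~\ref{FSTG} from scratch, so there is no circularity in invoking the slice theorem before it appears in the text; the only caveat is that it yields the identities for almost every $t$ at each fixed $n$, which is exactly the sense in which $\mathcal{R}f(n,\cdot)$ is defined anyway. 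Your alternative change-of-variables argument is the classical proof (essentially what the cited reference does), and you handle the one genuinely delicate point correctly: writing $A$ in block form with respect to the splittings $({}^t\!An)^\perp\oplus\R\,{}^t\!An$ and $n^\perp\oplus\R n$, the lower-left block vanishes and the normal stretching factor is $|{}^t\!An|/|n|$, so the tangential Jacobian is $|\det A|\,|n|/|{}^t\!An|$, which against the prefactor $|\det A|^{-1}/|n|$ leaves $1/|{}^t\!An|$, as the homogeneity normalization in \eqref{radon} requires. In short, the Fourier route buys brevity and a unified treatment of (i) and (ii), while the geometric route is self-contained at the level of measure theory on hyperplanes; either would serve as a complete proof.
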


We  now state a crucial result in Radon transform theory in its standard version. 
Below we prove two variations that are taylored to our setting but are also of some independent interest.
\begin{proposition}[Fourier slice theorem, 1]
\label{FSTG}
For any  $f\in L^1(\mathbb{R}^d)$  
\[
\mathcal{F}(\mathcal{R}f(n,\cdot))(\tau)=\mathcal{F}f(\tau n).
\]
for all $n\in\mathbb{R}^d\setminus\{0\}$ and all $\tau\in\mathbb{R}$.
\end{proposition}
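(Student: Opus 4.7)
The plan is to compute $\mathcal{F}(\mathcal{R}f(n,\cdot))(\tau)$ directly from the definition and recognize the result as $\mathcal{F}f(\tau n)$ via Fubini and a slicing identity. All the work happens in one integral manipulation; the substantive content is the geometric observation that on the hyperplane $n\cdot x=t$ the exponent $-2\pi i\tau t$ equals $-2\pi i\tau (n\cdot x)$.

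First I would unfold the one-dimensional Fourier transform and substitute the definition~\eqref{radon} of the Radon transform, obtaining
\[
\mathcal{F}(\mathcal{R}f(n,\cdot))(\tau)
=\int_{\R}\left(\frac{1}{|n|}\int_{n\cdot x=t}f(x)\,\D m(x)\right)\E^{-2\pi i\tau t}\,\D t.
\]
Before swapping the order of integration, I must check absolute integrability. This is where the slicing/co-area identity
\[
\int_{\R^d}|g(x)|\,\D x
= \int_{\R}\left(\frac{1}{|n|}\int_{n\cdot x=t}|g(x)|\,\D m(x)\right)\D t
\]
enters: applied with $g=f\in L^1(\R^d)$, it guarantees that Fubini's theorem applies to the double integral above. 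This identity is in fact exactly the one already invoked in the paper just after~\eqref{radon} to justify that $\mathcal R f(n,t)$ is well defined for almost every $t$.

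Once Fubini is in place, the key geometric step is that on the hyperplane $\{x: n\cdot x=t\}$ the character $\E^{-2\pi i\tau t}$ becomes the restriction of $\E^{-2\pi i\tau(n\cdot x)}$. Pulling this factor inside the inner integral yields
\[
\mathcal{F}(\mathcal{R}f(n,\cdot))(\tau)
=\int_{\R}\frac{1}{|n|}\int_{n\cdot x=t}f(x)\,\E^{-2\pi i\tau(n\cdot x)}\,\D m(x)\,\D t,
\]
and reassembling the slices into an integral over $\R^d$ via the same co-area identity, but now applied to $g(x)=f(x)\E^{-2\pi i\tau(n\cdot x)}\in L^1(\R^d)$, gives
\[
\mathcal{F}(\mathcal{R}f(n,\cdot))(\tau)
=\int_{\R^d}f(x)\E^{-2\pi i(\tau n)\cdot x}\,\D x
=\mathcal{F}f(\tau n),
\]
which is the desired identity.

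The only mild obstacle is verifying the slicing identity used twice, but this is standard: for $n\neq 0$ one can choose an orthogonal change of variables sending $n/|n|$ to $e_1$, reducing to the elementary Fubini decomposition $\R^d\simeq\R\times\R^{d-1}$, with the factor $1/|n|$ arising because the level set parameter $t=n\cdot x$ differs from arclength along $n$ by a factor of $|n|$. Apart from this bookkeeping, everything is forced.
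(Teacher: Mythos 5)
Your proposal is correct. Note that the paper does not actually prove Proposition~\ref{FSTG}: it is quoted as the standard version of the Fourier slice theorem (with \cite{raka96,helgason99} as references), and the paper only proves the two variations, Propositions~\ref{fst} and~\ref{fstg3}. Your argument is exactly the standard one the paper implicitly relies on: Fubini justified by the coarea/slicing identity, the observation that $\E^{-2\pi i\tau t}=\E^{-2\pi i\tau (n\cdot x)}$ on the slice $n\cdot x=t$, and reassembly of the slices into an integral over $\R^d$. One small point in your favour: the slicing identity with the factor $1/|n|$ is the correct coarea statement (the paper's display just after~\eqref{radon} omits this factor, which is harmless for the finiteness conclusion there but would matter for your final reassembly), and your Fubini step also shows $\mathcal{R}f(n,\cdot)\in L^1(\R)$ with $\|\mathcal{R}f(n,\cdot)\|_1\le\|f\|_1$, so the left-hand side is indeed defined for every $\tau$, as the statement requires.
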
 
Here the Fourier transform on the right hand side is in $\R^d$, whereas the
operator $\mathcal F$ on the left hand side is 1-dimensional and acts on the
variabile~$t$. We repeat this slight abuse of notation in other
formulas below.

In the next formulation, written for the affine Radon transform, the function $f$ to which $ \mathcal{R}^{\rm aff}$ is applied is taken in $L^1(\R^d)\cap L^2(\R^d)$.
\begin{proposition}[Fourier slice theorem, 2]\label{fst}
Define $\psi:\R^{d-1}\times(\R\setminus\{0\})\rightarrow\R^d$ by  $\psi(v,\tau)=\tau n(v)$. For every  $f\in L^1(\R^d)\cap L^2(\R^d)$ there exists a negligible set $E\subseteq\R^{d-1}$ such that for all $v\not\in E$ the function $\mathcal{R}^{\rm aff}f(v,\cdot)$ is in $L^2(\mathbb{R})$ and satisfies
\begin{equation}
\label{fst2}
\mathcal{R}^{\rm aff}f(v,\cdot)=\mathcal{F}^{-1}[\mathcal{F}f\circ\psi(v,\cdot)].
\end{equation}
\end{proposition}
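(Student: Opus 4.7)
The plan is to combine the Fourier slice theorem (Proposition~\ref{FSTG}) with a weighted Fubini argument. Since $\mathcal{R}^{\rm aff}f(v,\cdot)=\mathcal R f(n(v),\cdot)$, Proposition~\ref{FSTG} applied with $n=n(v)$ gives, for every $v\in\R^{d-1}$,
\[
\mathcal F\bigl(\mathcal{R}^{\rm aff}f(v,\cdot)\bigr)(\tau)=\mathcal F f(\tau n(v))=(\mathcal F f)\circ\psi(v,\tau),\qquad \tau\in\R.
\]
Thus the identity~\eqref{fst2} will follow as soon as we verify that $(\mathcal F f)\circ\psi(v,\cdot)\in L^2(\R)$ for almost every $v$ and that $\mathcal{R}^{\rm aff}f(v,\cdot)\in L^1(\R)$.

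The $L^1$-integrability in $t$ is immediate: by Fubini, the substitution $(x_1,x')=(t-v\cdot y,y)$ yields
\[
\int_{\R}\bigl|\mathcal{R}^{\rm aff}f(v,t)\bigr|\D t \le \int_{\R}\int_{\R^{d-1}}|f(t-v\cdot y,y)|\D y\,\D t=\|f\|_1
\]
for every $v\in\R^{d-1}$, so $\mathcal{R}^{\rm aff}f(v,\cdot)\in L^1(\R)$ pointwise in $v$ and its Fourier transform is well defined as a continuous bounded function of $\tau$.

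The main step is the $L^2$-claim for the Fourier slices, which I would obtain by averaging in $v$ against the weight $(1+|v|^2)^{-M}$. The map $(\tau,v)\mapsto\xi=\tau n(v)=(\tau,\tau v)$ is a diffeomorphism of $(\R\setminus\{0\})\times\R^{d-1}$ onto $\{\xi\in\R^d:\xi_1\neq 0\}$ with Jacobian $|\tau|^{d-1}$, and under it $1+|v|^2=|\xi|^2/\xi_1^2$. Fubini and this change of variables give
\[
\int_{\R^{d-1}}\frac{1}{(1+|v|^2)^M}\int_\R|\mathcal F f(\tau n(v))|^2\D\tau\,\D v=\int_{\R^d}|\mathcal F f(\xi)|^2\,\frac{|\xi_1|^{2M-d+1}}{|\xi|^{2M}}\,\D\xi.
\]
Choosing any $M\ge(d-1)/2$, the inequality $|\xi_1|\le|\xi|$ bounds the weight by $|\xi|^{1-d}$. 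Splitting at $|\xi|=1$: on $\{|\xi|\le 1\}$ one uses $|\mathcal F f|\le\|f\|_1$ together with $\int_{|\xi|\le 1}|\xi|^{1-d}\D\xi<+\infty$; on $\{|\xi|>1\}$ the weight is at most $1$ and one invokes $\mathcal F f\in L^2(\R^d)$. The weighted integral is therefore finite, which forces $\int_\R|\mathcal F f(\tau n(v))|^2\D\tau<+\infty$ for almost every $v$, and this defines the exceptional null set $E$.

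For $v\notin E$, the function $\mathcal{R}^{\rm aff}f(v,\cdot)$ lies in $L^1(\R)$ while its Fourier transform $(\mathcal F f)\circ\psi(v,\cdot)$ lies in $L^2(\R)$; the injectivity of the Fourier transform on tempered distributions, combined with Plancherel, then forces $\mathcal{R}^{\rm aff}f(v,\cdot)$ itself to belong to $L^2(\R)$, and~\eqref{fst2} follows by applying $\mathcal F^{-1}$. I expect the main obstacle to be precisely the weighted $L^2$-estimate above: it works only because the $L^1$-norm of $f$ controls $\mathcal F f$ near the origin while its $L^2$-norm controls $\mathcal F f$ at infinity, these two regimes being matched against the $|\tau|^{d-1}$ Jacobian of the ``polar'' parametrization $\xi=\tau n(v)$.
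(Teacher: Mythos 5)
Your proof is correct, and it differs from the paper's in how the key almost-everywhere $L^2$ estimate for the Fourier slices is organized. The paper applies Plancherel together with the change of variables $\xi=\tau n(v)$ to the identity $\|f\|_2^2=\int_{\R^{d-1}}\int_\R|\mathcal F f(\tau n(v))|^2|\tau|^{d-1}\,\D\tau\,\D v$, concluding that the $|\tau|^{d-1}$-weighted slice integral is finite for a.e.\ $v$, and then, for each such $v$, splits the unweighted integral at $|\tau|=1$, bounding the low-frequency part by $\|\mathcal F f\|_\infty\le\|f\|_1$ and the high-frequency part by the weighted integral. You instead prove a single global Tonelli estimate, damping in $v$ by $(1+|v|^2)^{-M}$ so that the change of variables produces the weight $|\xi_1|^{2M-d+1}/|\xi|^{2M}\le|\xi|^{1-d}$ (for $M\ge(d-1)/2$), and then split at $|\xi|=1$ using $\|f\|_1$ near the origin and $\|f\|_2$ at infinity. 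The ingredients are identical (the $|\tau|^{d-1}$ Jacobian, $L^1$ controlling low frequencies, $L^2$ controlling high frequencies); the auxiliary weight in $v$ is genuinely needed in your version, since the unweighted double integral equals $\int_{\R^d}|\mathcal F f(\xi)|^2|\xi_1|^{1-d}\,\D\xi$, which is finite only for $f\in\cA$, whereas the paper avoids any auxiliary weight by keeping the Jacobian inside the slice integral and splitting per $v$. Your version is marginally longer but yields an explicit quantitative weighted bound, and your closing step is actually more careful than the paper's: you make explicit the standard fact that a function in $L^1(\R)$ whose ($L^1$-)Fourier transform lies in $L^2(\R)$ must itself lie in $L^2(\R)$, via uniqueness of the distributional Fourier transform, a point the paper leaves implicit before invoking $L^2$ inversion.
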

\begin{proof}
By Proposition \ref{FSTG} we know that for all $v\in\R^{d-1}$ the affine Radon transform $\mathcal{R}^{\rm aff}f(v,\cdot)$ is in $L^1(\R)$ and satisfies
\[
\mathcal{F}(\mathcal{R}^{\rm aff}f(v,\cdot))(\tau)=\mathcal{F}f\circ\psi(v,\tau),\qquad \tau\in\R.
\]
We start by  proving that the function $\tau\mapsto\mathcal{F}f\circ\psi(v,\tau)$ is in $L^2(\R)$, that is 
\[
\int_{\R}|\mathcal{F}f\circ\psi(v,\tau)|^2\D \tau<+\infty.
\]
The map $\psi:\mathbb{R}^{d-1}\times(\mathbb{R}\setminus\{0\})\rightarrow\mathbb{R}^d$, defined by $\psi(v,\tau)=\tau n(v)$, is a diffeomorphism onto the open set $V=\{\xi\in\mathbb{R}^d:\xi_1\ne0\}$ with Jacobian $J\psi(v,\tau)=\tau^{d-1}$.
By hypothesis we know that 
\begin{align*}
\|f\|_2^2=\int_{\R^d}|\mathcal{F}f(\xi)|^2\D \xi=\int_{\R^{d-1}}\int_{\R}|\mathcal{F}f\circ\psi(v,\tau)|^2|\tau|^{d-1}\D\tau\D v<+\infty,
\end{align*}
so that there exists a negligible set $E\subseteq\R^{d-1}$ such that 
\[
C_f:=\int_{\R}|\mathcal{F}f\circ\psi(v,\tau)|^2|\tau|^{d-1}\D\tau<+\infty
\]
for all $v\not\in E$. Therefore, for all $v\not\in E$ it holds 
\begin{align*}
\int_{\R}|\mathcal{F}f\circ\psi(v,\tau)|^2\D
  \tau & =\int_{|\tau|\leq1}|\mathcal{F}f\circ\psi(v,\tau)|^2\D
  \tau+\int_{|\tau|>1}\frac{|\tau|^{d-1}}{|\tau|^{d-1}}|\mathcal{F}f\circ\psi(v,\tau)|^2\D
  \tau\\
&\leq  2 \lVert \mathcal{F}f\rVert_{\infty} +  \int_{\R}|\mathcal{F}f\circ\psi(v,\tau)|^2|\tau|^{d-1}\D\tau\\ 
&\leq 2\|f\|_1^2+ C_f<+\infty.
\end{align*}
Hence the function $t\mapsto\mathcal{R}^{\rm aff}f(v,t)$ is in
$L^1(\R^d)\cap L^2(\R^d)$ and \eqref{fst2} follows by  the Fourier inversion
formula in $L^2(\R)$.
\end{proof}

It is possible to extend the affine Radon transform $\mathcal{R}^{\rm aff}$ to $L^2(\R^d)$ as a unitary map. However, this raises some technical issues, that are addressed in the next section.
\subsection{The unitary extension}
Consider the subspace
\[
\mathcal{D}=\bigl\{f\in L^2(\mathbb{R}^{d-1}\times\mathbb{R}):
\int_{\mathbb{R}^{d-1}\times\mathbb{R}}|\tau|^{d-1}\left|\mathcal{F}f(\xi,\tau)\right|^2\,\D\xi\D\tau<+\infty\bigr\}
\]
of $L^2(\mathbb{R}^{d-1}\times\mathbb{R})$ and define the operator $\mathcal{I}:\mathcal{D}\rightarrow L^2(\mathbb{R}^{d-1}\times\mathbb{R})$ by
\begin{equation} 
\mathcal{F}\mathcal{I}f(\xi,\tau)=|\tau|^\frac{d-1}{2}\mathcal{F}f(\xi,\tau),
\label{riesz}
\end{equation}
a Fourier multiplier with respect to the last variable. 
Since $\tau\mapsto |\tau|^\frac{d-1}{2}$ is a strictly positive
  (almost everywhere) Borel
  function on $\R$, the spectral theorem for unbounded operators, see Theorem~VIII.6 of
\cite{resi80}, shows that $\mathcal{D}$ is dense and that  $\mathcal{I}$ is a
positive self-adjoint injective operator.

\begin{remark}
  The operator $\mathcal{I}$ is related to the inverse of the Riesz
  potential with exponent $(d-1)/2$ on $L^2(\mathbb{R})$. Indeed, if
  $\psi_2\in L^2(\mathbb{R}^{d-1})$ and if $\psi_1\in L^2(\mathbb{R})$
  is such that
  \[
    \int_{\mathbb{R}}|\tau|^{d-1}|\mathcal{F}\psi_1(\tau)|^2\ {\rm
      d}\tau<+\infty,
  \]
  then $\psi_2\otimes\psi_1\in\mathcal{D}$, because
  \begin{align*}
    &\int_{\mathbb{R}^{d-1}\times\mathbb{R}}|\tau|^{d-1}|\mathcal{F}(\psi_2\otimes\psi_1)(\xi,\tau)|^2\ {\rm d}\xi {\rm d}\tau\\
    &=\int_{\mathbb{R}^{d-1}}|\mathcal{F}\psi_2(\xi)|^2\ {\rm d}\xi\ \int_{\mathbb{R}}|\tau|^{d-1}|\mathcal{F}\psi_1(\tau)|^2\ {\rm d}\tau<+\infty,
  \end{align*}
  so that
  \[
    \mathcal{I}(\psi_2\otimes\psi_1)=\psi_2\otimes\mathcal{I}_0\psi_1,
  \]
  where $\mathcal{I}_0$ is the inverse of the standard Riesz potential
  defined by
  \begin{equation}
    \label{Izero}
    \mathcal{F}\mathcal{I}_0\psi_1(\tau)=|\tau|^\frac{d-1}{2}\mathcal{F}\psi_1(\tau).
  \end{equation}
\end{remark}


Furthermore, $\mathcal{D}$ is invariant under translations and dilations by matrices of the form
\begin{equation}
\label{matrixform}
A=\left[\begin{matrix}A_0 & 0 \\ v & a\end{matrix}\right],
\end{equation}
where $A_0\in\text{GL}(d-1,\R),\ v\in\mathbb{R}^{d-1},\ a\in\mathbb{R}^\times$.
\begin{lemma}
\label{operatorI}
For all $b\in\mathbb{R}^d$ and $A$ as in \eqref{matrixform} it holds
\begin{equation}
\label{Irelations}
\mathcal{I}T_b=T_b\mathcal{I},\qquad \mathcal{I}D_A=|a|^{-\frac{d-1}{2}}D_A\mathcal{I}.
\end{equation}

\end{lemma}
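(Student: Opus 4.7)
The plan is to translate both identities into equivalent statements on the Fourier side, where $\mathcal{I}$ is just multiplication by $|\tau|^{(d-1)/2}$ in the last variable. Before comparing operators I would first confirm that $T_b$ and $D_A$ map $\mathcal{D}$ into itself, so that the compositions $\mathcal{I}T_b$ and $\mathcal{I}D_A$ are well defined there. For $T_b$ this is immediate because translation becomes multiplication by a unimodular character after $\mathcal{F}$, so the weighted integral defining $\mathcal{D}$ is unchanged. For $D_A$ the invariance will follow by a straightforward change of variables in the same integral, which produces only the overall factor $(|a|^d|\det A_0|)^{-1}$.

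The core ingredient is the standard identity $\mathcal{F}D_Af(\xi)=\mathcal{F}f({}^tA\xi)$ for $A\in\mathrm{GL}(d,\R)$. For the specific block form assumed in the lemma, the transpose
\[
{}^tA=\begin{bmatrix}{}^tA_0 & {}^tv \\ 0 & a\end{bmatrix}
\]
is upper block-triangular with last diagonal entry equal to $a$, so writing $\xi=(\xi_0,\tau)$ with $\xi_0\in\R^{d-1}$ and $\tau\in\R$ gives $\mathcal{F}D_Af(\xi_0,\tau)=\mathcal{F}f({}^tA_0\xi_0+\tau\,{}^tv,\,a\tau)$. The first identity $\mathcal{I}T_b=T_b\mathcal{I}$ is then the trivial statement that multiplication by $|\tau|^{(d-1)/2}$ commutes with multiplication by the character $e^{-2\pi i(\xi,\tau)\cdot b}$. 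For the second identity I would compute $\mathcal{F}(\mathcal{I}D_Af)$ and $\mathcal{F}(D_A\mathcal{I}f)$ separately; the former picks up a prefactor $|\tau|^{(d-1)/2}$, the latter picks up $|a\tau|^{(d-1)/2}$ because the $\mathcal{I}$ on the right hand side sees the transformed last variable $a\tau$, and the ratio $|a|^{(d-1)/2}$ is exactly the constant appearing in \eqref{Irelations}.

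I do not foresee a genuine obstacle. The only mildly delicate point is the domain bookkeeping needed to treat the equality as one of unbounded operators on $\mathcal{D}$, but since the Fourier computation is well defined pointwise almost everywhere and the resulting functions lie in $L^2$ by the domain-invariance step, injectivity of $\mathcal{F}$ promotes the pointwise Fourier equality to an equality in $L^2(\R^{d-1}\times\R)$. The constant $|a|^{(d-1)/2}$ ultimately reflects that $\mathcal{I}$ is homogeneous of degree $(d-1)/2$ in the last frequency variable, while the block form of $A$ makes $a$ the ${}^tA$-eigenvalue of that same axis; the lower-triangular shift $v$ plays no role because it affects only the first $d-1$ coordinates, which the multiplier $|\tau|^{(d-1)/2}$ ignores.
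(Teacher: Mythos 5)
Your proposal is correct and follows essentially the same route as the paper: both work on the Fourier side, use $\mathcal{F}T_bf(\xi,\tau)=e^{-2\pi i b\cdot\xi}\mathcal{F}f(\xi,\tau)$ and $\mathcal{F}D_Af(\xi,\tau)=\mathcal{F}f({}^tA_0\xi+\tau v,a\tau)$, and compare the multipliers $|\tau|^{\frac{d-1}{2}}$ and $|a\tau|^{\frac{d-1}{2}}$ to extract the constant $|a|^{-\frac{d-1}{2}}$. Your additional check that $T_b$ and $D_A$ preserve $\mathcal{D}$ is the domain fact the paper asserts just before the lemma, so nothing essential differs.
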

\begin{proof}
  The first of relations \eqref{Irelations} is a consequence of the
  fact that
  $\mathcal{F}T_bf(\xi,\tau)=e^{-2\pi
    ib\cdot\xi}\mathcal{F}f(\xi,\tau)$ for all
  $f\in L^2(\mathbb{R}^{d-1}\times\mathbb{R})$.  Precisely, for all
  $f\in\mathcal{D}$ we have that
\begin{align*}
\mathcal{F}\mathcal{I}T_bf(\xi,\tau)&=|\tau|^\frac{d-1}{2}\mathcal{F}T_bf(\xi,\tau)\\
&=|\tau|^\frac{d-1}{2}e^{-2\pi ib\cdot\xi}\mathcal{F}f(\xi,\tau)\\
&=e^{-2\pi ib\cdot\xi}\mathcal{F}\mathcal{I}f(\xi,\tau)\\
&=\mathcal{F}T_b\mathcal{I}f(\xi,\tau),
\end{align*}
whence $\mathcal{I}T_b=T_b\mathcal{I}$. The second follows from 
$\mathcal{F}D_Af(\xi,\tau)=\mathcal{F}f({^t\!A}(\xi,\tau))$. Indeed, for all $f\in\mathcal{D}$
\begin{align*}
\mathcal{F}\mathcal{I}D_Af(\xi,\tau)&=|\tau|^\frac{d-1}{2}\mathcal{F}D_Af(\xi,\tau)\\
&=|\tau|^\frac{d-1}{2}\mathcal{F}f({^t\!A}(\xi,\tau))\\
&=|\tau|^\frac{d-1}{2}\mathcal{F}f({^t\!A}_0\xi+\tau v,a\tau)\\
&=|\tau|^\frac{d-1}{2}|a\tau|^{-\frac{d-1}{2}}\mathcal{F}\mathcal{I}f({^t\!A}_0\xi+\tau v,a\tau)\\
&=|a|^{-\frac{d-1}{2}}\mathcal{F}D_A\mathcal{I}f(\xi,\tau).
\end{align*}
This proves \eqref{Irelations}.
\end{proof} 
The space $\mathcal{D}$ becomes a pre-Hilbert space with respect to the scalar product
\begin{equation}
\label{scalarproduct}
\begin{split}
\langle f,g\rangle_\mathcal{D}&=\langle \mathcal{I}f,\mathcal{I}g\rangle_2\\
&=\int_{\mathbb{R}^{d-1}\times\mathbb{R}}|\tau|^{d-1}\mathcal{F}(f(v,\cdot))(\tau)\overline{\mathcal{F}(g(v,\cdot))(\tau)}\ {\rm d}v{\rm d}\tau.
\end{split}
\end{equation}
Furthermore,
\[
||f||^2_{\mathcal{D}}=\langle f,f\rangle_{\mathcal{D}}=\langle\mathcal{I}f,\mathcal{I}f\rangle_2=||\mathcal{I}f||_2^2,
\]
for all $f\in\mathcal{D}$. Hence $\mathcal{I}$ is an isometric operator from $\mathcal{D}$, with the new scalar product \eqref{scalarproduct}, to $L^2(\mathbb{R}^{d-1}\times\mathbb{R})$. Since $\mathcal{I}$ is self-adjoint and injective,  $\text{Ran}(\mathcal{I})$ is dense in $L^2(\mathbb{R}^{d-1}\times\mathbb{R})$. Hence, by standard arguments, it extends uniquely to a unitary operator, denoted $\mathscr{I}$, from the  completion $\mathcal{H}$ of  $\mathcal{D}$ onto $L^2(\mathbb{R}^{d-1}\times\mathbb{R})$.\\ 

To extend $\mathcal{R}^{\rm aff}$ to $L^2(\R^d)$ as a unitary
operator, note that, by Proposition~\ref{FSTG} with $n=n(v)$, the affine Radon transform of $f\in L^1(\R^d)\cap L^2(\R^d)$ belongs to $L^2(\R^{d-1}\times\R)$ if and only if is finite the integral
\begin{align*}
\int_{\R^{d-1}\times\R}|\mathcal{R}^{\rm aff}f(v,t)|^2 \D v\D t&=\int_{\R^{d-1}}\int_{\R}|\mathcal{F}(\mathcal{R}^{\rm aff}f(v,\cdot))(\tau)|^2 \D \tau\D v\\
&=\int_{\R^{d-1}\times\R}|\mathcal{F}f(\tau, \tau v)|^2 \D \tau\D v\\
&=\int_{\R^d}\frac{|\mathcal{F}f(\xi)|^2}{|\xi_1|^{d-1}}\D\xi,
\end{align*}
where $\xi_1$ is the first component of the vector $\xi\in\R^d$. Therefore requiring that $\mathcal{R}^{\rm aff}f$ belongs to $L^2(\R^{d-1}\times\R)$ is equivalent to 
\[
\int_{\R^d}\frac{|\mathcal{F}f(\xi)|^2}{|\xi_1|^{d-1}}\D\xi<+\infty.
\]
We denote by
\[
\mathcal{A}=\{f\in L^1(\R^d)\cap L^2(\mathbb{R}^d) : \int_{\R^d}\frac{|\mathcal{F}f(\xi)|^2}{|\xi_1|^{d-1}}\D\xi<+\infty\},
\]
which is dense in $L^2(\R^d)$ since it contains the functions  whose
Fourier transform  is smooth  and  has   compact support
disjoint from the hyperplane $\xi_1=0$. 
By definition of $\mathcal{A}$,  $\mathcal{R}^{\rm aff}f\in L^2(\R^{d-1}\times\R)$ for all $f\in\mathcal{A}$. 
 

We shall need a suitable formulation of the main result in Radon
transform theory, namely the following version of Theorem~4.1 in
\cite{helgason99}.  For the sake of completeness we include the proof
  in~\ref{other-proofs}.

\begin{theorem}
\label{tfstg}
The affine  Radon transform extends to a unique unitary operator from
$L^2(\mathbb{R}^d)$ onto $\mathcal{H}$, denoted with $\mathscr{R}$
and, hence, $\cQ=\mathscr{I}\mathscr{R}$ is a unitary operator from
$L^2(\mathbb{R}^d)$ onto $L^2(\mathbb{R}^{d-1}\times\mathbb{R})$. 
\end{theorem}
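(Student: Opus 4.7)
The plan is to construct $\mathscr{R}$ by extending $\mathcal{R}^{\rm aff}$ from the dense subspace $\mathcal{A} \subset L^2(\R^d)$ to all of $L^2(\R^d)$, using Proposition~\ref{fst} (the Fourier slice theorem) as the bridge between the $L^2$-norm on $\R^d$ and the $\mathcal{D}$-norm on $\R^{d-1}\times\R$. Once the isometry property is checked on $\mathcal{A}$, density yields a unique bounded extension $\mathscr{R}: L^2(\R^d) \to \mathcal{H}$; one then separately verifies surjectivity. The second claim of the theorem ($\cQ = \mathscr{I}\mathscr{R}$ is unitary) follows immediately since $\mathscr{I}$ is already known to be unitary.

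For the isometry step, let $f \in \mathcal{A}$. Proposition~\ref{fst} gives $\mathcal{F}(\mathcal{R}^{\rm aff} f(v, \cdot))(\tau) = \widehat{f}(\tau, \tau v)$ for a.e.\ $v$. Substituting this into the defining formula \eqref{scalarproduct} for the $\mathcal{D}$-inner product and performing the change of variables $\xi_1 = \tau$, $\xi' = \tau v$ (whose Jacobian is $|\tau|^{d-1}$) yields
\[
\|\mathcal{R}^{\rm aff} f\|_\mathcal{D}^2 = \int_{\R^{d-1}\times\R} |\tau|^{d-1} \, |\widehat{f}(\tau, \tau v)|^2 \, \D v \, \D\tau = \int_{\R^d} |\widehat{f}(\xi)|^2 \, \D\xi = \|f\|_2^2.
\]
This identifies $\mathcal{R}^{\rm aff}$ as an isometry from $\mathcal{A}$ (with the $L^2$-norm) into the pre-Hilbert space $\mathcal{D}$, and since $\mathcal{A}$ is dense in $L^2(\R^d)$ while $\mathcal{H}$ is by definition the completion of $\mathcal{D}$, it extends uniquely to an isometry $\mathscr{R}: L^2(\R^d) \to \mathcal{H}$.

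For surjectivity, it is equivalent to prove that the composition $\mathcal{Q}_0 := \mathcal{I}\circ \mathcal{R}^{\rm aff}|_\mathcal{A}$ has dense range in $L^2(\R^{d-1}\times\R)$. Combining Proposition~\ref{fst} with \eqref{riesz}, the partial Fourier transform in $t$ of $\mathcal{Q}_0 f$ equals $|\tau|^{(d-1)/2} \widehat{f}(\tau, \tau v)$, since the multiplier in $\tau$ commutes with the $v$-partial Fourier transform. The same change of variables $\xi_1 = \tau$, $\xi' = \tau v$ now exhibits the inverse explicitly: given $g \in L^2(\R^{d-1}\times\R)$ whose partial $t$-Fourier transform $\widetilde{g}(v,\tau)$ is smooth and compactly supported away from $\tau = 0$, setting $\widehat{f}(\xi_1, \xi') = |\xi_1|^{-(d-1)/2} \widetilde{g}(\xi'/\xi_1, \xi_1)$ defines an $f \in \mathcal{A}$ with $\mathcal{Q}_0 f = g$. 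The compactness of $\supp \widetilde{g}$ away from $\tau=0$ keeps both $\|f\|_2^2$ and $\int |\widehat{f}(\xi)|^2/|\xi_1|^{d-1}\D\xi$ finite, and $f \in L^1$ follows because $\widehat{f}$ is smooth and compactly supported. Such $g$ are dense in $L^2(\R^{d-1}\times\R)$, so $\mathscr{R}$ is onto $\mathcal{H}$ and $\cQ = \mathscr{I}\mathscr{R}$ is unitary as a composition of unitaries.

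The main obstacle is bookkeeping: the argument shuttles among the full Fourier transform on $\R^d$, the full Fourier transform on $\R^{d-1}\times\R$, and the partial Fourier transform in $t$, while also tracking the singular map $(v,\tau) \mapsto \tau n(v)$, which degenerates along $\tau = 0$. The weight $|\tau|^{(d-1)/2}$ built into $\mathcal{I}$ is precisely what compensates the Jacobian $|\tau|^{d-1}$ of this change of variables, so the nontrivial geometric input of the theorem is already encoded in the definition of $\mathcal{D}$, leaving only computational verification.
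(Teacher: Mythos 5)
Your proposal is correct, and its first half (the isometry step via the Fourier slice theorem and the change of variables $\xi=\tau n(v)$ with Jacobian $|\tau|^{d-1}$, followed by extension by density) coincides with the paper's argument. Where you genuinely diverge is the surjectivity step. The paper stays inside the pre-Hilbert space $\mathcal{D}$: it takes $\varphi\in\mathcal{D}$ orthogonal, with respect to $\langle\cdot,\cdot\rangle_{\mathcal{D}}$, to $\mathcal{R}^{\rm aff}f$ for all $f\in\mathcal{A}$, rewrites the pairing by the same change of variables as
$\int_{\R^d}\mathcal{F}\bigl(\varphi(\tilde{\xi}/\xi_1,\cdot)\bigr)(\xi_1)\,\overline{\mathcal{F}f(\xi)}\,\D\xi$,
and concludes $\varphi=0$, so that $\mathcal{R}^{\rm aff}(\mathcal{A})$ is dense in $\mathcal{D}$ and hence in $\mathcal{H}$. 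You instead transport the problem to $L^2(\R^{d-1}\times\R)$ through the unitary $\mathscr{I}$ (legitimate, since the paper establishes the unitarity of $\mathscr{I}$ before the theorem, and an isometry has closed range, so dense range of $\cQ_0=\mathcal{I}\mathcal{R}^{\rm aff}|_{\mathcal{A}}$ is indeed equivalent to $\mathscr{R}$ being onto $\mathcal{H}$) and then exhibit explicit preimages: for $g$ with partial $t$-Fourier transform $\widetilde{g}$ smooth and compactly supported away from $\tau=0$, the function defined by $\widehat{f}(\xi_1,\xi')=|\xi_1|^{-(d-1)/2}\widetilde{g}(\xi'/\xi_1,\xi_1)$ is smooth, compactly supported away from $\xi_1=0$, hence lies in $\mathcal{A}$, and satisfies $\cQ_0f=g$; such $g$ are dense. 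Both routes rest on the same geometric input (the map $(v,\tau)\mapsto\tau n(v)$ and the weight $|\tau|^{d-1}$); your version is more constructive, effectively writing down the inverse of $\cQ$ on a dense class, at the price of verifying membership in $\mathcal{A}$ of the constructed $f$, while the paper's orthogonality argument avoids any such construction but requires the identification of the $\mathcal{D}$-pairing with an $L^2(\R^d)$ pairing. One small point worth making explicit in your write-up: the isometry identity presupposes $\mathcal{R}^{\rm aff}f\in L^2(\R^{d-1}\times\R)$ for $f\in\mathcal{A}$, which is exactly how $\mathcal{A}$ was designed in the paper's preliminaries, so no gap results.
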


As mentioned above, we need yet another generalization of the Fourier slice theorem (Proposition~\ref{FSTG}). We think that it is perhaps known,
  but we could not locate it in the literature. The proof is given in~\ref{other-proofs}.
\begin{proposition}[Fourier slice theorem, 3]\label{fstg3}
For all $f\in L^2(\mathbb{R}^d)$ 
\begin{equation}
\label{fstg}
\mathcal{F}(\cQ f(v,\cdot))(\tau)=|\tau|^\frac{d-1}{2}\mathcal{F}f(\tau n(v)) 
\end{equation}
for almost every $(v,\tau)\in\R^{d-1}\times\R$.
\end{proposition}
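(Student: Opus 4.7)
The plan is to verify the identity on the dense subspace $\mathcal{A}\subset L^2(\R^d)$ by a direct computation and then extend it to all of $L^2(\R^d)$ by a density-plus-continuity argument, after recognizing that both sides of~\eqref{fstg} are bounded operators from $L^2(\R^d)$ into $L^2(\R^{d-1}\times\R)$ with the same action on $\mathcal{A}$.

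First I would handle the dense core. For $f\in\mathcal{A}$ Proposition~\ref{fst} yields that $\mathcal{R}^{\mathrm{aff}}f(v,\cdot)\in L^2(\R)$ for almost every $v$, with $\mathcal{F}(\mathcal{R}^{\mathrm{aff}}f(v,\cdot))(\tau)=\mathcal{F}f(\tau n(v))$. Since $f\in\mathcal{A}$ implies $\mathcal{R}^{\mathrm{aff}}f\in L^2(\R^{d-1}\times\R)$ and $\mathscr{R}f=\mathcal{R}^{\mathrm{aff}}f$, one has $\mathcal{Q}f=\mathscr{I}\mathscr{R}f=\mathcal{I}\mathcal{R}^{\mathrm{aff}}f$. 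Applying the definition~\eqref{riesz} of the Fourier multiplier $\mathcal{I}$ in the last variable then gives
\[
\mathcal{F}(\mathcal{Q}f(v,\cdot))(\tau)=|\tau|^{\frac{d-1}{2}}\mathcal{F}(\mathcal{R}^{\mathrm{aff}}f(v,\cdot))(\tau)=|\tau|^{\frac{d-1}{2}}\mathcal{F}f(\tau n(v)),
\]
which is exactly~\eqref{fstg} on $\mathcal{A}$.

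Next I would show both sides of~\eqref{fstg}, regarded as maps $f\mapsto$ (function of $(v,\tau)$), are bounded from $L^2(\R^d)$ to $L^2(\R^{d-1}\times\R)$. For the left-hand side this is immediate: $\mathcal{Q}$ is unitary by Theorem~\ref{tfstg} and the partial Fourier transform in the $t$-variable is unitary on $L^2(\R^{d-1}\times\R)$, so $f\mapsto \mathcal{F}(\mathcal{Q}f(v,\cdot))(\tau)$ is unitary. For the right-hand side, define $T:L^2(\R^d)\to L^2(\R^{d-1}\times\R)$ by $Tg(v,\tau)=|\tau|^{\frac{d-1}{2}}g(\tau n(v))$. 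Applying the change of variables $\xi=\tau n(v)=(\tau,\tau v)$, which is a diffeomorphism between $\R^{d-1}\times\R^\times$ and $\{\xi\in\R^d:\xi_1\neq 0\}$ with Jacobian $|\tau|^{d-1}$, gives
\[
\|Tg\|_2^2=\int_{\R^{d-1}\times\R}|\tau|^{d-1}|g(\tau n(v))|^2\,\D v\,\D\tau=\int_{\R^d}|g(\xi)|^2\,\D\xi=\|g\|_2^2,
\]
so $T$ is an isometry. Composing with the Fourier transform on $L^2(\R^d)$ shows that $f\mapsto|\tau|^{\frac{d-1}{2}}\mathcal{F}f(\tau n(v))$ is an isometry as well.

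Finally, both bounded operators agree on $\mathcal{A}$ and $\mathcal{A}$ is dense in $L^2(\R^d)$ (as already observed after the definition of $\mathcal{A}$). A standard continuity argument then forces the two operators to coincide as elements of $L^2(\R^{d-1}\times\R)$, which gives~\eqref{fstg} for almost every $(v,\tau)$. I do not anticipate a substantive obstacle: the only subtlety is that the pointwise identity~\eqref{fstg} must be read as equality of $L^2$-classes, so the ``almost every'' in the statement is the natural one coming from restricting $\mathcal{F}f$ to the hypersurfaces $\{\tau n(v):v\in\R^{d-1}\}$, which is well-posed precisely because of the change-of-variables formula used to verify that $T$ is isometric.
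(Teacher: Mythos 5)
Your proposal is correct and follows essentially the same route as the paper: establish \eqref{fstg} on the dense subspace $\mathcal{A}$ from Proposition~\ref{fst} and the definition of $\cQ$, then extend by density. The only cosmetic difference is that you phrase the limiting step as equality of two bounded (isometric) operators agreeing on $\mathcal{A}$ — with the isometry of $f\mapsto|\tau|^{\frac{d-1}{2}}\mathcal{F}f(\tau n(v))$ made explicit via the change of variables $\xi=\tau n(v)$ — whereas the paper extracts an almost everywhere convergent subsequence from $(\operatorname{I}\otimes\mathcal{F})\cQ f_n$; both hinge on exactly the same computation.
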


\section{The Intertwining Theorem and its consequences}\label{main}

\subsection{The main Theorem}
We recall that the group $G$ is the semidirect product $G=\mathbb{R}^{d}\rtimes H$ where 
$H=SD$ is the  shearlet dilation group,  $S$ is the shearing subgroup 
and $D$ the scaling subgroup of $H$, as in Definition~\ref{GSDG}. Each
element in $G$ is
parametrized by a triple
$(b,s,a)\in\R^d\times\R^{d-1}\times\R^{\times}$ and $S_{b,s,a}$ is as in \eqref{quasireg2}.

\begin{theorem}
\label{generalteo}
The  unitary operator
$\cQ$ intertwines the shearlet representation with the tensor product of two unitary representations, precisely
\begin{equation}
\label{general}
\cQ S_{b,s,a}f(v,t)=(V_{s,\, a}\otimes W_{n(v)\cdot b,a})\cQ f(v,t)
\end{equation}
for every $f\in L^2(\mathbb{R}^d)$. \end{theorem}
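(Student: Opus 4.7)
The plan is to verify~\eqref{general} by taking the partial Fourier transform in the second variable~$t$ of both sides and checking pointwise a.e.\ agreement, which is enough since this transform is an isometry of~$L^2(\R^{d-1}\times\R)$. The main engine will be Proposition~\ref{fstg3}, which expresses the $t$-Fourier transform of $\cQ g$ in terms of the $d$-dimensional Fourier transform~$\mathcal F g$.

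For the left-hand side, I first apply Proposition~\ref{fstg3} with $g=S_{b,s,a}f$ and combine it with the standard identity
\[
\mathcal F(S_{b,s,a}f)(\xi)=|a|^{(d+\lambda_D)/2}\,\E^{-2\pi ib\cdot\xi}\,\mathcal Ff({}^th_{s,a}\xi),
\]
which follows from $|\det h_{s,a}|=|a|^{d+\lambda_D}$ (itself a consequence of~\eqref{hsa} and $\det B(s)=1$) together with the translation/dilation rule. The key matrix calculation is the dual-orbit identity
\[
{}^th_{s,a}n(v)=a\,n(v'),\qquad v':=\Lambda(a)\bigl({}^tB(s)v-s\bigr),
\]
obtained directly by transposing~\eqref{hsa} and acting on $n(v)={}^t(1,{}^tv)$; note that $v'$ is precisely the $H$-action on $\R^{d-1}$ entering the definition of $V_{s,a}$. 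A second invocation of Proposition~\ref{fstg3}, now applied to $f$ at the point $a\tau\,n(v')$, rewrites the argument of $\mathcal Ff$ as $|a\tau|^{-(d-1)/2}\mathcal F_t(\cQ f)(v',a\tau)$. The powers of $|\tau|$ cancel and I arrive at
\[
\mathcal F_t(\cQ S_{b,s,a}f)(v,\tau)=|a|^{(\lambda_D+1)/2}\,\E^{-2\pi i\tau n(v)\cdot b}\,\mathcal F_t(\cQ f)(v',a\tau).
\]

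For the right-hand side I interpret $V_{s,a}\otimes W_{n(v)\cdot b,a}$ in the natural order ``$V_{s,a}$ on the $v$-slot first, then $W$ on the $t$-slot with the translation parameter still referring to the outer $v$'', which yields
\[
(V_{s,a}\otimes W_{n(v)\cdot b,a})\cQ f(v,t)=|a|^{(\lambda_D-1)/2}\,\cQ f\!\left(v',\tfrac{t-n(v)\cdot b}{a}\right).
\]
Applying the elementary rule $\mathcal F_t[\phi((\cdot-c)/a)](\tau)=|a|\,\E^{-2\pi ic\tau}\widehat\phi(a\tau)$ with $c=n(v)\cdot b$ and $\phi=\cQ f(v',\cdot)$ reproduces verbatim the formula just obtained for the left-hand side, completing the comparison.

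The only conceptually delicate point is making rigorous sense of $V_{s,a}\otimes W_{n(v)\cdot b,a}$: the tensor symbol is shorthand, because the translation parameter of the second factor depends on the variable of the first slot, and the two natural orderings of ``$V$ first'' and ``$W$ first'' produce inequivalent formulas---only the ordering selected above matches~\eqref{general}. Once this is pinned down and the matrix identity ${}^th_{s,a}n(v)=a\,n(v')$ is carried out, the rest reduces to bookkeeping powers of~$|a|$ and two applications of the Fourier slice theorem, both of which are routine.
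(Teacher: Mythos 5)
Your argument is correct, and it is genuinely different from the paper's. You work entirely on the Fourier side: both sides of~\eqref{general} are hit with the partial Fourier transform in $t$, Proposition~\ref{fstg3} is invoked twice (once for $S_{b,s,a}f$, once for $f$), and the whole group element is handled in one stroke through the dual-action identity ${}^th_{s,a}n(v)=a\,n\bigl(\Lambda(a)({}^tB(s)v-s)\bigr)$; since Proposition~\ref{fstg3} is already stated for all of $L^2(\R^d)$, no density argument is needed. The paper instead factors $(b,s,a)=(b,0,1)(0,s,1)(0,0,a)$, proves the intertwining for each factor separately on the dense subspace $\mathcal A$ using the transformation rules of the affine Radon transform (Proposition~\ref{radonprop}) together with the commutation relations of $\mathcal I$ with translations and block-triangular dilations (Lemma~\ref{operatorI}), and then extends by density. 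Your route is shorter and makes the exponent bookkeeping transparent (the $|\tau|$-powers from the two slice-theorem applications cancel, leaving exactly $|a|^{(\lambda_D+1)/2}$ on both sides), at the cost of pushing all the structure into Proposition~\ref{fstg3}; the paper's route is longer but exhibits the separate roles of translations, shears and scalings and never leaves the Radon-transform picture. Your resolution of the notational ambiguity in $V_{s,a}\otimes W_{n(v)\cdot b,a}$ (the translation parameter refers to the outer variable $v$, i.e. the $W$-factor acts with $v$ frozen at the evaluation point) is the same convention that the paper's own composite $(\operatorname{I}\otimes W_{n(v)\cdot b,1})(V_{s,1}\otimes\operatorname{I})(V_{0,a}\otimes W_{0,a})$ produces, so the two statements agree. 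Two small points worth one line each in a polished write-up: when you evaluate the slice identity for $f$ at $(v',a\tau)$ you should note that the affine map $(v,\tau)\mapsto(\Lambda(a)({}^tB(s)v-s),a\tau)$ is a diffeomorphism, so the exceptional null set of Proposition~\ref{fstg3} pulls back to a null set; and $\det B(s)=1$ (unipotence) is what justifies $|\det h_{s,a}|=|a|^{d+\lambda_D}$ in your formula for $\mathcal F(S_{b,s,a}f)$.
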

\begin{proof}
By density, it is enough to prove the equality on $\mathcal{A}$. We shall use throughout the fact that $\mathcal{R}^{\rm aff}f\in\cD$ for every $f\in\mathcal{A}$ and that $\cD$ is invariant under all translations and under the dilations described in \eqref{matrixform}. Since for all $(b,s,a)\in G$ it holds $(b,s,a)=(b,0,1)(0,s,1)(0,0,a)$, it is sufficient to prove the equality for each of the three factors. For $f\in\mathcal{A}$ and $b\in\mathbb{R}^d$ we have
\begin{align*}
\mathcal{R}^{\rm aff}S_{b,0,1}f(v,t)&=\mathcal{R}^{\rm aff}T_{b}f(v,t)\\
&=\mathcal{R}T_{b}f(n(v),t)\\
&=\mathcal{R}f(n(v),t-n(v)\cdot b)\\
&=\mathcal{R}^{\rm aff}f(v,t-n(v)\cdot b)\\
&=(\operatorname{I}\otimes W_{n(v)\cdot b,1})\mathcal{R}^{\rm aff}f(v,t).
\end{align*}
Since $\mathcal{I}$ commutes with translations,  $\operatorname{I}\otimes W_{n(v)\cdot b,1}=T_{(0,n(v)\cdot b)}$ implies
\[
\mathcal{I}\mathcal{R}^{\rm aff}S_{b,0,1}f(v,t)=\mathcal{I}(\operatorname{I}\otimes W_{n(v)\cdot b,1})\mathcal{R}^{\rm aff}f(v,t)=(\operatorname{I}\otimes W_{n(v)\cdot b,1})\mathcal{I}\mathcal{R}^{\rm aff}f(v,t).
\]
For $f\in\cA$ and $a\in\R^{\times}$ we have
\begin{align*}
\mathcal{R}^{\rm
  aff}S_{0,0,a}f(v,t)&=|a|^{\frac{d+ \lambda_D}{2}}\mathcal{R}^{\rm
                       aff}D_{h_{0,a}}f(v,t)\\ 
&=|a|^{\frac{d+\lambda_D}{2}}\mathcal{R}D_{h_{0,a}}f(n(v),t)\\
&=|a|^{\frac{d+\lambda_D}{2}}\mathcal{R}f(^{t}h_{0,a}n(v),t).
\end{align*}
A direct calculation gives
\[
^{t}h_{0,a}n(v)=a\left[\begin{matrix}1 & 0 \\ 0 & \Lambda(a) \end{matrix}\right]\left[\begin{matrix}1\\ v\end{matrix}\right]=a\left[\begin{matrix}1\\ {\Lambda(a)}{v }\end{matrix}\right]=an\left({\Lambda(a)}{v }\right).
\]
The behavior of the Radon transform under linear operations implies that
\begin{align*}
\mathcal{R}^{\rm aff}S_{0,0,a}f(v,t)
&=|a|^{\frac{d+\lambda_D}{2}}\mathcal{R}f\left(an\left({\Lambda(a)}{v }\right),a\frac{t}{a}\right)\\
&=|a|^{\frac{d+\lambda_D}{2}-1}\mathcal{R}^{\text{aff}}f\left({\Lambda(a)}v,\frac{t}{a}\right)\\
&=|a|^{\frac{d-1}{2}}(V_{0,\,a}\otimes W_{0,a})\mathcal{R}^{\rm aff}f\left(v,t\right).
\end{align*}
Since
\[
(V_{0,\,a}\otimes W_{0,a})=|a|^\frac{3(1-\lambda_D)}{2}D_{A},
\]
where the matrix $A$ is of the form
\[
A=\left[\begin{matrix}\Lambda(a)^{-1} & 0 \\ 0 & a \end{matrix}\right],
\]
and because of the behavior of the operator $\mathcal{I}$ under dilations, we obtain 
\begin{align*}
\mathcal{I}\mathcal{R}^{\rm aff}S_{0,0,a}f(v,t)&=\mathcal{I}|a|^{\frac{d-1}{2}}(V_{0,\,a}\otimes W_{0,a})\mathcal{R}^{\rm aff}f(v,t)\\
&=(V_{0,\,a}\otimes W_{0,a})\mathcal{I}\mathcal{R}^{\rm aff}f(v,t).
\end{align*}
Finally, let $s={^t(s_1,\ldots,s_{d-1})}\in\R^{d-1}$. Then
\begin{align*}
\mathcal{R}^{\rm aff}S_{0,s,1}f(v,t)&=\mathcal{R}^{\rm aff}D_{h_{s,1}}f(v,t)\\
&=\mathcal{R}D_{h_{s,1}}f(n(v),t)=\mathcal{R}f(^{t}h_{s,1}n(v),t).
\end{align*}
Since
\[
^{t}h_{s,1}n(v)=\left[\begin{matrix}1 & 0\\ -s & ^{t}B(s)\end{matrix}\right]\left[\begin{matrix}1\\ v\end{matrix}\right]=\left[\begin{matrix}1\\ {^{t}B(s)}v-s \end{matrix}\right]=n\left({^{t}B(s)}v-s\right),
\]
by Proposition~\ref{radonprop} we obtain the following string of equalities:
\begin{align*}
\mathcal{R}^{\rm aff}S_{0,s,1}f(v,t)
&=\mathcal{R}f\left(n\left({^{t}B(s)}v-s\right),t\right)\\
&=\mathcal{R}^{\rm aff}f\left({^{t}B(s)}v-s,t\right)\\
&=(V_{s,1}\otimes \operatorname{I})\mathcal{R}^{\rm aff}f\left(v,t\right).
\end{align*}
Finally,
\[
(V_{s,1}\otimes \operatorname{I})=|a|^\frac{3(1-\lambda_D)}{2}T_{(-({^{t}B(s)})^{-1}s,0)}D_{A},
\]
where 
\[
A=\left[\begin{matrix}{^{t}B(s)}^{-1} & 0 \\ 0 & 1 \end{matrix}\right],
\]
so that the behavior of $\mathcal{I}$ under dilations implies
\begin{align*}
\mathcal{I}\mathcal{R}^{\rm aff}S_{0,s,1}f(v,t)
&=\mathcal{I}(V_{s,1}\otimes \operatorname{I})\mathcal{R}^{\rm aff}f(v,t)\\
&=(V_{s,1}\otimes \operatorname{I})\mathcal{I}\mathcal{R}^{\rm aff}f(v,t).
\end{align*}
Therefore, by
\[
\mathcal{I}\mathcal{R}^{\rm aff}S_{b,s,a}f=\mathcal{I}\mathcal{R}^{\rm aff}S_{b,0,1}S_{0,s,1}S_{0,0,a}f,
\]
equation \eqref{general} follows applying the relations obtained above.
\end{proof}

\subsection{The admissibility conditions}
In this subsection we discuss the admissibility conditions and some of their consequences. 

Our objective is to obtain an expression for the shearlet transform that makes use of formula~\eqref{general}.
To this end, we start by looking for  natural conditions that
guarantee that $\psi\in L^2(\R^d)$ is  an admissible vector for
  the shearlet representation $S$, namely that it  satisfies \eqref{eqn:admvect}. 

Equation \eqref{general} suggests that a good choice for the
admissible vector $\psi$ is of the form  
\[
\cQ\psi=\phi_2\otimes\phi_1
\]
where $\phi_1\in L^2(\R)$, $\phi_2\in L^2(\R^{d-1})$. If this is the
case, then by~\eqref{fstg} it follows that 
\[
\phi_2(v)\mathcal{F}\phi_1(\tau)=|\tau|^\frac{d-1}{2}\mathcal{F}\psi(\tau n(v)) 
\]
so that $\mathcal{F}\psi$ factorizes as
\begin{equation}
\label{condition}
\mathcal{F}\psi(\tau,\tau v)=\mathcal{F}\psi_1(\tau)\mathcal{F}\psi_2(v),
\end{equation}
where we assume that $\psi_2\in L^2(\mathbb{R}^{d-1})$ and $\psi_1\in
L^2(\mathbb{R})$. Equation~\eqref{condition} is  the canonical choice of admissible
vectors given by~\eqref{eq:8}. Furthermore,
\[
\mathcal{F}\phi_1(\tau)=|\tau|^\frac{d-1}{2}\mathcal{F}\psi_{1}(\tau)\qquad \phi_2(v)=\mathcal{F}\psi_2(v),
\]
so that the assumption that $\psi_2\in L^2(\R^{d-1})$ is automatically satisfied.
Since $\phi_1\in L^2(\R)$, then    
\[
\int_{\mathbb{R}}|\tau|^{d-1}|\mathcal{F}\psi_1(\tau)|^2\ {\rm d}\tau<+\infty.
\]
This, together with the fact that $\psi_1\in L^2(\R)$, implies that  $\psi_1$ belongs to the domain of the differential operator
$\mathcal{I}_0$ (see~ \eqref{Izero}).
Therefore
\begin{equation}
\phi_1=\mathcal{I}_0\psi_1.
\label{pippo}
\end{equation}
 With the choice~\eqref{condition} the
admissibility condition~\eqref{eqn:admvect} reduces to  
 \[
0<\int_{\R}\frac{|\mathcal{F}\psi_1(\tau)|^2}{|\tau|}\D \tau<+\infty.\]

From now on we fix $\psi\in L^2(\mathbb{R}^d)$ of the form
\eqref{condition} with  $\psi_1\in L^2(\mathbb{R})$ satisfying 
\begin{equation}
\label{conditions}
\int_{\mathbb{R}}|\tau|^{d-1}|\mathcal{F}\psi_1(\tau)|^2\ {\rm d}\tau<+\infty,
\qquad 
0<\int_{\R}\frac{|\mathcal{F}\psi_1(\tau)|^2}{|\tau|}\D \tau<+\infty,
\end{equation}
and $\psi_2\in L^2(\mathbb{R}^{d-1})$.

\begin{corollary}\label{formulazza}
Under the assumptions \eqref{conditions}, for every $L^2(\R^d)$ 
\begin{align}
\label{firstm}
\mathcal{S}_{\psi}f(b,s,a)
=\mathcal{V}_{\phi_2}\left(\mathcal{W}_{\phi_1}(\cQ f(v,t))(n(v)\cdot b,a)\right)(s,a)
\end{align}
\end{corollary}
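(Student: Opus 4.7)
The plan is to transport the shearlet coefficient to the Radon side via the unitarity of $\cQ$, apply the intertwining identity of Theorem~\ref{generalteo} to rewrite $\cQ S_{b,s,a}\psi$ using the fibered tensor operator, and then exploit the factorization of $\cQ\psi$ to split the resulting inner product into nested one-dimensional integrals identifiable as the wavelet transform $\cW_{\phi_1}$ followed by the voice transform $\cV_{\phi_2}$. Concretely, I would start from
\[
\mathcal{S}_\psi f(b,s,a)=\langle f,S_{b,s,a}\psi\rangle_{L^2(\R^d)}=\langle \cQ f,\cQ S_{b,s,a}\psi\rangle_{L^2(\R^{d-1}\times\R)},
\]
which is legitimate because $\cQ$ is unitary (Theorem~\ref{tfstg}), and then use Theorem~\ref{generalteo} to obtain
$\cQ S_{b,s,a}\psi=(V_{s,a}\otimes W_{n(v)\cdot b,a})\cQ\psi$.

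The next step is to identify $\cQ\psi$ as a pure tensor $\phi_2\otimes\phi_1$, that is $\cQ\psi(v,t)=\phi_2(v)\phi_1(t)$. For this I would apply the Fourier slice theorem~\eqref{fstg} together with the factorization assumption~\eqref{condition}:
\[
\mathcal{F}(\cQ\psi(v,\cdot))(\tau)=|\tau|^{(d-1)/2}\mathcal{F}\psi(\tau n(v))=|\tau|^{(d-1)/2}\mathcal{F}\psi_1(\tau)\,\mathcal{F}\psi_2(v)=\mathcal{F}\phi_1(\tau)\,\phi_2(v),
\]
recalling that $\mathcal{F}\phi_1(\tau)=|\tau|^{(d-1)/2}\mathcal{F}\psi_1(\tau)$ and $\phi_2=\mathcal{F}\psi_2$; inverting the one-dimensional Fourier transform in $\tau$ gives the product form. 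Consequently the fibered tensor operator evaluates on $\cQ\psi$ as
\[
(V_{s,a}\otimes W_{n(v)\cdot b,a})\cQ\psi(v,t)=V_{s,a}\phi_2(v)\cdot W_{n(v)\cdot b,a}\phi_1(t),
\]
with the understanding that for fixed $v$ the wavelet operator $W_{n(v)\cdot b,a}$ acts on the $t$-variable with the $v$-dependent translation parameter $n(v)\cdot b$.

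I would then substitute this into the inner product and apply Fubini's theorem, which is justified because $V_{s,a}$ is unitary on $L^2(\R^{d-1})$ and, for each fixed $v$, $W_{n(v)\cdot b,a}$ is unitary on $L^2(\R)$, so the integrand is square-integrable on $\R^{d-1}\times\R$ and Cauchy--Schwarz gives $L^1$. Integrating first in $t$ for fixed $v$ yields
\[
\int_{\R}\cQ f(v,t)\,\overline{W_{n(v)\cdot b,a}\phi_1(t)}\,\D t=\cW_{\phi_1}\bigl(\cQ f(v,\cdot)\bigr)(n(v)\cdot b,a),
\]
and pairing the resulting function of $v$ against $V_{s,a}\phi_2$ in $L^2(\R^{d-1})$ reproduces exactly $\cV_{\phi_2}(\,\cdot\,)(s,a)$, giving~\eqref{firstm}. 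The main obstacle is essentially notational: one must handle carefully the fibered tensor product from Theorem~\ref{generalteo}, where $n(v)\cdot b$ is a $v$-dependent parameter rather than a constant, and verify that the natural Fubini manipulation still goes through; apart from this bookkeeping, the argument is a routine chain of identifications.
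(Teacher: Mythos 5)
Your argument is correct and follows essentially the same route as the paper: unitarity of $\cQ$, the intertwining Theorem~\ref{generalteo}, the factorization $\cQ\psi=\phi_2\otimes\phi_1$ obtained from the Fourier slice theorem and \eqref{condition}, and a Fubini step splitting the inner product into the wavelet transform in the last variable followed by the voice transform in $v$. The only detail the paper spells out that you omit is the verification that $\phi_1$ satisfies the Calder\'on condition under the assumptions \eqref{conditions} (so that the inner integral is legitimately written as $\mathcal{W}_{\phi_1}$); this is a routine estimate and does not affect the validity of your proof.
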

\begin{proof}
For all $f\in L^2(\mathbb{R}^d)$ and $(b,s,a)\in G$
\begin{align}
\label{first}
\nonumber&\mathcal{S}_{\psi} f(b,s,a)=\langle f,S_{b,s,a}\psi\rangle_2\\
\nonumber&=\langle \cQ f,\cQ S_{b,s,a}\psi\rangle_2\\
\nonumber&=\langle \cQ f,(V_{s, a}\otimes W_{n(\cdot)\cdot b,a})\cQ\psi\rangle_2\\
\nonumber&=\langle \cQ f,(V_{s, a}\otimes W_{n(\cdot)\cdot b,a})(\phi_2\otimes\phi_1)\rangle_2\\
\nonumber&=\langle \cQ f,V_{s, a}\phi_2\otimes W_{n(\cdot)\cdot b,a}\phi_1\rangle_2\\
\nonumber&=\int_{\mathbb{R}^{d-1}\times\mathbb{R}}\cQ f(v,\tau)\overline{V_{s, a}\phi_2(v)W_{n(v)\cdot b,a}\phi_1(\tau)}\ {\rm d}v{\rm d}\tau\\
\nonumber&=\int_{\mathbb{R}^{d-1}}\left(\int_{\mathbb{R}}\cQ f(v,\tau)\overline{W_{n(v)\cdot b,a}\phi_1(\tau)}\ {\rm d}\tau\right)\overline{V_{s, a}\phi_2(v)}\ {\rm d}v\\
&=\int_{\mathbb{R}^{d-1}}\mathcal{W}_{\phi_1}(\cQ
  f(v,\bullet))(n(v)\cdot b,a)\overline{V_{s, a}\phi_2(v)}\ {\rm d}v,
\end{align}
where in the last  equality we have used the fact that $\phi_1$ is an admissible wavelet. This is true  because 
by \eqref{pippo}
\begin{align*}
\int_{\mathbb{R}}\frac{|\mathcal{F}\phi_1(\tau)|^2}{|\tau|}\ {\rm d}\tau&=\int_{\mathbb{R}}\frac{|\mathcal{F}\mathcal{I}_0\psi_1(\tau)|^2}{|\tau|}\ {\rm d}\tau\\
&\leq\int_{0<|\tau|<1}\frac{|\mathcal{F}\psi_1(\tau)|^2}{|\tau|}\ {\rm d}\tau+\int_{|\tau|\geq1}|\tau|^{d-1}|\mathcal{F}\psi_1(\tau)|^2\ {\rm d}\tau\\
&\leq\int_{\mathbb{R}}\frac{|\mathcal{F}\psi_1(\tau)|^2}{|\tau|}\ {\rm d}\tau+\int_{\mathbb{R}}|\tau|^{d-1}|\mathcal{F}\psi_1(\tau)|^2\ {\rm d}\tau,
\end{align*}
which are both finite.
\end{proof}

Equation~\eqref{first} shows that the shearlet coefficients
  $\mathcal{S}_{\psi} f(b,s,a)$ can be computed in terms of  the
  unitary Radon transform $\cQ f$, which involves the
  pseudo-differential operator $\mathcal I$ and it is difficult to
  compute numerically.  However, if $f\in L^1(\R^d)\cap L^2(\R^d)$,
there is yet a different way to express the shearlet transform. To this end we need to choose $\psi$ in such a way that $\cQ\psi$ is in the domain of the operator $\mathcal{I}$, that is, in such a way that
\[
\int_{\mathbb{R}^{d-1}\times\mathbb{R}}|\tau|^{d-1}|\mathcal{F}\cQ\psi(v,\tau)|^2\
{\rm d}v{\rm d}\tau<+\infty. 
\] 
Assuming this and recalling that $\cQ\psi=\mathcal{F}\psi_2\otimes\cI_0\psi_1$ we obtain
\begin{align*}
&\int_{\mathbb{R}^{d-1}\times\mathbb{R}}|\tau|^{d-1}|\mathcal{F}\cQ\psi(v,\tau)|^2\ {\rm d}v{\rm d}\tau\\
&=\int_{\mathbb{R}^{d-1}\times\mathbb{R}}|\tau|^{d-1}|\mathcal{F}(\mathcal{F}\psi_2\otimes\mathcal{I}_0\psi_1)(v,\tau)|^2\ {\rm d}v{\rm d}\tau\\
&=\int_{\mathbb{R}^{d-1}}|\mathcal{F}\psi_2(v)|^2\ {\rm d}v\ \int_{\mathbb{R}}|\tau|^{d-1}|\mathcal{F}\mathcal{I}_0\psi_1(\tau)|^2\ {\rm d}\tau\\
&=||\psi_2||_2^2\ \int_{\mathbb{R}}|\tau|^{2(d-1)}|\mathcal{F}\psi_1(\tau)|^2\ {\rm d}\tau.
\end{align*}
This shows that $\cQ\psi$ is in the domain of $\mathcal{I}$ if and
only if $\psi_1$ satisfies the additional condition  
\begin{equation}
\label{newcondition}
\int_{\mathbb{R}}|\tau|^{2(d-1)}|\mathcal{F}\psi_1(\tau)|^2\ {\rm d}\tau<+\infty.
\end{equation}
In this case, by \eqref{pippo}
\[ 
\mathcal{I} Q\psi = \mathcal{I} (\phi_2\otimes\phi_1)=
  \phi_2\otimes \mathcal{I}_0\phi_1. 
  \]
  \begin{corollary}\label{formulazza2}
Under the assumptions \eqref{conditions} and \eqref{newcondition},  
\begin{align}
\nonumber&\mathcal{S}_{\psi} f(b,s,a)=|a|^{-\frac{d-1}{2}}\mathcal{V}_{\phi_2}\left(\mathcal{W}_{\chi_1}
(\mathcal{R}^{\rm aff}f(v,t))(n(v)\cdot b,a)\right)(s,a).
\end{align}
for all $f\in L^1(\R^d)\cap L^2(\R^d)$.
\end{corollary}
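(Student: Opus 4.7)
The plan is to start from Corollary \ref{formulazza}, which already gives
\[
\mathcal{S}_{\psi}f(b,s,a)
=\mathcal{V}_{\phi_2}\!\left(\mathcal{W}_{\phi_1}(\cQ f(v,t))(n(v)\cdot b,a)\right)(s,a),
\]
and to rewrite the inner one-dimensional wavelet transform so that it involves $\mathcal{R}^{\rm aff}f$ in place of $\cQ f=\mathscr{I}\mathscr{R}f$. The conceptual point is to transfer the unbounded Fourier multiplier $\mathcal{I}$ off the Radon transform, where it is hard to compute, and onto the analyzing wavelet $\phi_1$, where condition \eqref{newcondition} has been imposed precisely so that the shift is legitimate.

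First I would check that for every $f\in L^1(\R^d)\cap L^2(\R^d)$ the slice $t\mapsto \mathcal{R}^{\rm aff}f(v,t)$ belongs to $\mathcal{D}$ for almost every $v$. Combining Proposition \ref{FSTG} with the change of variables $\xi=\tau n(v)$ (Jacobian $|\tau|^{d-1}$) gives
\[
\|f\|_2^2=\int_{\R^d}|\mathcal{F}f(\xi)|^2\,\D\xi
=\int_{\R^{d-1}}\!\int_{\R}|\tau|^{d-1}|\mathcal{F}f(\tau n(v))|^2\,\D\tau\,\D v,
\]
so by Fubini the inner integral is finite a.e., which is the defining condition of $\mathcal{D}$ in view of Proposition \ref{fst}. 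Proposition \ref{fstg3} then identifies $\cQ f(v,\cdot)=\mathcal{I}\mathcal{R}^{\rm aff}f(v,\cdot)$ a.e. Next I would verify that $W_{b,a}\phi_1\in\mathcal{D}$ for all $(b,a)$: using $\mathcal{F}(W_{b,a}\phi_1)(\tau)=|a|^{1/2}e^{-2\pi ib\tau}\mathcal{F}\phi_1(a\tau)$ and $\mathcal{F}\phi_1(u)=|u|^{(d-1)/2}\mathcal{F}\psi_1(u)$, the $\mathcal{D}$-condition reduces after substitution to the finiteness of $\int|\tau|^{2(d-1)}|\mathcal{F}\psi_1(\tau)|^2\,\D\tau$, which is exactly \eqref{newcondition}.

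With both sides in $\mathcal{D}$, self-adjointness of $\mathcal{I}$ yields
\[
\mathcal{W}_{\phi_1}\!\bigl(\cQ f(v,\bullet)\bigr)(b,a)
=\langle\mathcal{I}\mathcal{R}^{\rm aff}f(v,\bullet),W_{b,a}\phi_1\rangle
=\langle\mathcal{R}^{\rm aff}f(v,\bullet),\mathcal{I}W_{b,a}\phi_1\rangle.
\]
A short Fourier-side computation, using again $\mathcal{F}\phi_1(u)=|u|^{(d-1)/2}\mathcal{F}\psi_1(u)$, gives the clean identity
\[
\mathcal{I}W_{b,a}\phi_1=|a|^{-\frac{d-1}{2}}W_{b,a}\chi_1,
\qquad
\mathcal{F}\chi_1(\tau):=|\tau|^{d-1}\mathcal{F}\psi_1(\tau).
\]
One checks that $\chi_1\in L^2(\R)$ thanks to \eqref{newcondition} and that $\chi_1$ is an admissible wavelet: splitting the Calder\'on integral as $\int|\mathcal{F}\chi_1(\tau)|^2/|\tau|\,\D\tau=\int|\tau|^{2d-3}|\mathcal{F}\psi_1(\tau)|^2\,\D\tau$ and using $|\tau|^{2d-3}\le|\tau|^{-1}$ on $\{|\tau|\le 1\}$ (by \eqref{conditions}) together with $|\tau|^{2d-3}\le|\tau|^{2(d-1)}$ on $\{|\tau|\ge 1\}$ (by \eqref{newcondition}) gives finiteness. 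Substituting the previous identity back into Corollary \ref{formulazza} produces exactly the claimed formula.

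The only real obstacle is the book-keeping in step two: one must justify rigorously that $\mathcal{R}^{\rm aff}f(v,\cdot)$ lies in $\mathcal{D}$ for a.e.\ $v$ and that $W_{b,a}\phi_1$ does so for every $(b,a)$, so that the passage of $\mathcal{I}$ across the inner product is an honest use of self-adjointness rather than a formal manipulation. Both reductions, however, are essentially the Fubini identity coming from the change of variables $\xi=\tau n(v)$ combined with the two quantitative conditions \eqref{conditions} and \eqref{newcondition}, and together they make the derivation of Corollary \ref{formulazza2} from Corollary \ref{formulazza} a direct duality computation.
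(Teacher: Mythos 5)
Your proposal is correct and follows essentially the same route as the paper: identify $\cQ f(v,\cdot)=\mathcal{I}_0\mathcal{R}^{\rm aff}f(v,\cdot)$ for a.e.\ $v$ via the Fourier slice theorems, move $\mathcal{I}_0$ onto the analyzing wavelet by self-adjointness together with the commutation $\mathcal{I}_0W_{b,a}=|a|^{-\frac{d-1}{2}}W_{b,a}\mathcal{I}_0$, set $\chi_1=\mathcal{I}_0\phi_1$ (checking its admissibility exactly as you do), and substitute back into Corollary~\ref{formulazza}. Two cosmetic remarks: the slices $\mathcal{R}^{\rm aff}f(v,\cdot)$ and $W_{b,a}\phi_1$ should be said to lie in the domain of the one-dimensional operator $\mathcal{I}_0$ rather than in $\mathcal{D}\subset L^2(\R^{d-1}\times\R)$, and your relation $\mathcal{F}\chi_1(\tau)=|\tau|^{d-1}\mathcal{F}\psi_1(\tau)$ is the one consistent with $\chi_1=\mathcal{I}_0\phi_1=\mathcal{I}_0^2\psi_1$ (the exponent displayed in~\eqref{chi} corresponds to the case $d=2$).
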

\begin{proof}
For all $f\in L^1(\R^d)\cap L^2(\R^d)$ and $(b,s,a)\in G$
\begin{align}
\label{secondo}
\nonumber\mathcal{S}_{\psi}f(b,s,a)&=\int_{\mathbb{R}^{d-1}}\left(\int_{\mathbb{R}}\cQ f(v,\tau)\overline{W_{n(v)\cdot b,a}\phi_1(\tau)}\ {\rm d}\tau\right)\overline{V_{s, a}\phi_2(v)}\ {\rm d}v\\
&=\int_{\mathbb{R}^{d-1}} \langle \cQ f(v,\cdot),
           W_{n(v)\cdot b,a} \phi_1\rangle_2 \overline{V_{s,
           a}\phi_2(v)}\ {\rm d}v .
\end{align}

Since $ f\in L^1(\R^d)\cap L^2(\R^d)$, Proposition~\ref{fst}
and~Proposition~\ref{fstg3}  imply that for almost all $v\in\R^{d-1}$,  
$\mathcal{R}^{\rm aff}f(v,\cdot)$ is in $L^2(\R)$ and
\begin{alignat*}{1}
   \mathcal{F}(\cQ
  f(v,\cdot))(\tau) & =|\tau|^\frac{d-1}{2}\mathcal{F}f(\tau,\tau v) = |\tau|^\frac{d-1}{2}  \mathcal{F}\mathcal{R}^{\rm
    aff}f(v,\cdot)(\tau).
\end{alignat*}
Since $\mathcal{F}(\cQ f(v,\cdot))\in L^2(\R)$ for almost all $v\in\R^{d-1}$, the above equality
  implies that $\mathcal{R}^{\rm aff}f(v,\cdot)$ is in the domain of
$\mathcal{I}_0$ and, by definition of $\mathcal{I}_0$,
\[
  \cQ f(v,\cdot)(\tau)  = \mathcal{I}_0 \mathcal{R}^{\rm aff}f(v,\cdot).
\]
By assumption $\phi_1$ is  in the domain of $\mathcal{I}_0$ and the
same property holds true for  $W_{n(v)\cdot b,a}\phi_1$. Since  $\mathcal{I}_0$ is
self-adjoint, we get 
  \begin{alignat*}{1}
    \langle \cQ f(v,\cdot), W_{n(v)\cdot b,a} \phi_1\rangle_2 & =
    \langle \mathcal{R}^{\rm aff}f(v,\cdot) , \mathcal{I}_0
    W_{n(v)\cdot b,a} \phi_1\rangle_2 \\
& = |a|^{-\frac{d-1}{2}} \langle
    \mathcal{R}^{\rm aff}f(v,\cdot) , W_{n(v)\cdot b,a}
    \mathcal{I}_0\phi_1\rangle_2 ,
  \end{alignat*}
by taking  into account that
\[
\mathcal{I}_0 W_{n(\cdot)\cdot b,a} =|a|^{-\frac{d-1}{2}}  W_{n(\cdot)\cdot b,a}\mathcal{I}_0.
\]
Setting $\chi_1=\mathcal{I}_0\phi_1=\mathcal{I}^2_0\psi_1$,  {\em i.e.}
\begin{equation}\label{chi}
\mathcal{F}\chi_1(\tau)=|\tau|\mathcal{F}\psi_1(\tau),
\end{equation}
from~\eqref{secondo} we finally get 
\begin{align}
  \label{second}
 & \mathcal{S}_{\psi}f(b,s,a)=\\
&|a|^\frac{\lambda_D+1-d}{2}\int_{\mathbb{R}^{d-1}}\mathcal{W}_{\chi_1}(\mathcal{R}^{\rm
    aff}f(v,\cdot))(n(v)\cdot
  b,a)\overline{\phi_2\left(\Lambda(a)({^tB(s)}v-s)\right)}\ {\rm
    d}v \nonumber
\end{align}
Observe that  we have used the fact that $\chi_1$ is an admissible
wavelet, too, the proof is analogous to the proof that $\psi_1$ is
such.  As for~\eqref{first} we can rewrite the above formula by using
the voice transform of $H$, {\em i.e.} 
\begin{align}
\nonumber&\mathcal{S}_{\psi} f(b,s,a)=|a|^{-\frac{d-1}{2}}\mathcal{V}_{\phi_2}\left(\mathcal{W}_{\chi_1}
(\mathcal{R}^{\rm aff}f(v,t))(n(v)\cdot b,a)\right)(s,a).
\end{align}

\end{proof}

Observe that formulas \eqref{first} and \eqref{second} can be also written in terms of the polar Radon transform using relation \eqref{relationpolaff}. 

Equation \eqref{second} shows that for any signal $f\in L^1(\R^d)\cap
  L^2(\R^d)$ the shearlet coefficients can be computed by means of
  three {\it classical} transforms: first compute the affine Radon
  transform $\mathcal{R}^{\rm aff}f$, then apply the wavelet transform
  to the last variable 
\[
G(v,b,a)=\mathcal{W}_{\chi_1}(\mathcal{R}^{\rm aff}f(v,\cdot))(n(v)\cdot b,a),
\]
where $\chi_1$ is given by \eqref{chi}, and, finally, ``mock-convolve'' with respect to the variable $v$
\[
\mathcal{S}_{\psi}^\gamma f(b,s,a)= \int_{\mathbb{R}^{d-1}} G(v,b,a) 
\Phi_a(s-{}^t\!B(s)v) \ {\rm d}v 
\]
with  the scale-dependent filter 
\[
\Phi_a(v)=\overline{\phi_2\left(- \Lambda(a)v\right)}.
\]
Note that the ``mok-convolution'' reduces to the standard convolution
in $\R^{d-1}$ when $B(t)=\operatorname{I}_{d-1}$.

Notice that the shearlet coefficients $\mathcal{S}_{\psi}^\gamma f(b,s,a)$
depend on $f$ only through its affine Radon transform
$\mathcal{R}^{\rm aff}f$. Therefore Equation~\eqref{final} allows to
reconstruct any unknown signal $f\in L^1(\R^d)\cap L^2(\R^{d})$ from its Radon
transform by computing the shearlet coefficients by means of
\eqref{second}. Finally, it is worth observing that this reconstruction does not involve
the differential operator $\mathcal{I}$ as applied to the signal.
Hence,  another interesting aspect of our result is that it could open the way
to new methods for inverting the Radon transform, a very important
issue in applications. Indeed, this result leads to an inversion
formula for the Radon transform based on the shearlet and the wavelet
transforms.

\begin{example}[The standard shearlet group, continued]
For the classical shearlet group $\mathbb S^\gamma$, \eqref{general} becomes
\begin{equation}
\label{eqn:intertwing}
\cQ S^\gamma_{b,s,a}f(v,t)=(V_{s,a}\otimes W_{n(v)\cdot b,a})\cQ f(v,t),
\end{equation}
where $S^\gamma_{b,s,a}$ is given by~\eqref{eq:12} and 
$V_{s,a}$ is the wavelet representation in  dimension $d-1$  as in~\eqref{eq:10}. 
Therefore in the case of the standard shearlet group
Theorem~\ref{generalteo} shows that the unitary operator $\cQ$
intertwines the shearlet representation $S^\gamma$ with the tensor
product of two wavelet representations. 

For a fixed admissible vector $\psi\in L^2(\mathbb{R}^d)$ of the form \eqref{condition} with $\psi_1\in L^2(\mathbb{R})$ satisfying \eqref{conditions} and $\psi_2\in L^2(\mathbb{R}^{d-1})$, equation \eqref{first} becomes 
\begin{align}\label{first2}
&\nonumber\mathcal{S}^\gamma_{\psi}f(b,s,a)\\
&=|a|^\frac{(d-1)(\gamma-1)}{2}\int_{\mathbb{R}^{d-1}}\mathcal{W}_{\phi_1}(\cQ f(v,\bullet))(n(v)\cdot b,a)\overline{\phi_2\left(\frac{v-s}{|a|^{1-\gamma}}\right)}\ {\rm d}v,
\end{align}
for any $f\in L^2(\R^d)$ and $(b,s,a)\in G$.
Assuming that $\psi_1$ satisfies the additional condition
\eqref{newcondition}, for any $f\in L^1(\R^d)\cap L^2(\R^d)$ and $(b,s,a)\in G$, equality
\eqref{second} becomes  
\begin{align}\label{second2}
&\nonumber\mathcal{S}^\gamma_{\psi}f(b,s,a)\\
&=|a|^\frac{(d-1)(\gamma-2)}{2}\int_{\mathbb{R}^{d-1}}\mathcal{W}_{\chi_1}(\mathcal{R}^{\rm aff}f(v,\bullet))(n(v)\cdot b,a)\overline{\phi_2\left(\frac{v-s}{|a|^{1-\gamma}}\right)}\ {\rm d}v,
\end{align}
where $\chi_1$ is the admissible vector defined by \eqref{chi}.

For the sake of clarity we write the above equation for $d=2$ and in terms
of the Radon transform in polar coordinates
  \begin{align*}
& \mathcal{S}^\gamma_{\psi}f(x,y,s,a) \\& 
= |a|^{\frac{\gamma-2}{2}}\int_{\R}   \mathcal W_{\chi_1} \left( \mathcal{R}^{\rm pol}
  f(\arctan v,\frac{\bullet }{ \sqrt{1+v^2}} )\right)(x+v y,a)\, \overline{
  \phi_2
  \Big(\frac{v-s}{|a|^{1-\gamma}}\Big)}\,\frac{\D v}{\sqrt{1+v^2}}, 
  \end{align*}
where $x,y,s\in \R$, $a\in\R^\times$ and $f\in L^1(\R^d)\cap L^2(\R^2)$.  As
mentioned in the introduction,  $\chi_1=\mathcal{I}^2_0\psi_1$,
see~\eqref{pippo}, so that the admissible $1D$-wavelet $\chi_1$ is
proportional to the Hilbert
transform $H$ of the weak derivative of $\psi_1$, which is the first
factor of the shearlet admissible vector $\widehat{\psi}(\xi_1,\xi_2)=
\widehat{\psi_1}(\xi_1) \widehat{\psi_2}(\xi_2/\xi_1)$. 
\end{example}

\section*{Acknowledgement} 
F. De Mari and E. De Vito are members of the Gruppo Nazionale per l'Analisi
  Matematica, la Probabilit\`a e le loro Applicazioni (GNAMPA) of the
  Istituto Nazionale di Alta Matematica (INdAM). 

\appendix

\section{The polar and the affine Radon transform}\label{polaff}
As mentioned in Section~\ref{affineRadon}, the natural restriction of the
Radon transform is the {\it polar Radon transform}
$\mathcal{R}^{\text{pol}}f$, which is obtained by restricting
$\mathcal{R}f$ to the closed subset $S^{d-1}\times\R$, where $S^{d-1}$
is the unit sphere in $\R^d$. Define
$\Theta^{d-1}=[0,\pi]^{d-2}\times[0,2\pi)$. For all
$\theta\in\Theta^{d-1}$ we write inductively  
\[
  ^t\theta={(\theta_1,{^t\hat{\theta}})},\qquad \theta_1\in[0,\pi],\
  \hat{\theta}\in\Theta^{d-2}
\]
and then we put 
\[
^t\eta(\theta)={(\cos\theta_1,\sin\theta_1{^t{\eta}(\hat{\theta})})},
\]
where $\eta(\hat{\theta})\in S^{d-2}$ corresponds to the previous inductive step. Clearly, the map $\eta:\Theta^{d-1}\rightarrow S^{d-1}$ induces  a parametrization of the unit sphere in $\mathbb{R}^d$.
Also, observe that the map $\Theta^{d-1}\rightarrow\mathbb{P}^{d-1}$ given by
$(\theta,t)\mapsto(\eta(\theta):t)$
is a two-fold covering of  \
$\mathbb{P}^{d-1}$.  
\begin{defn}
Take $f\in L^1(\R^d)$. The {\it polar Radon transform} of $f$ is the function 
$\mathcal{R}^{\rm pol}f:\Theta^{d-1}\times\mathbb{R}\rightarrow\mathbb{C}$
defined by
\begin{equation}
\label{pol}
\mathcal{R}^{\rm pol}f(\theta,t)
=\mathcal{R}f(\eta(\theta),t)
=\int_{\eta(\theta)\cdot x=t}f(x)\ {\rm d}m(x).
\end{equation}
\end{defn}
It is easy to find the relation between  $\mathcal{R}^{\rm pol}$ and $\mathcal{R}^{\rm aff}$. Using the parametrization $\eta$  of the unit sphere, we can write any vector $n(v)$ as $n(v)=\sqrt{1+|v|^2}\ \eta(\theta)$. More precisely, there exists ${^t\theta}=(\theta_1,{^t\hat{\theta}})\in\Theta^{d-1}$ such that 
\begin{equation}
\label{polar}
(1,{^tv})=\sqrt{1+|v|^2}(\cos\theta_1,\sin\theta_1{^t{\eta}(\hat{\theta})}).
\end{equation}
Equality \eqref{polar} holds if and only if 
\[
\cos\theta_1=\frac{1}{\sqrt{1+|v|^2}},\qquad \sin\theta_1{\eta}(\hat{\theta})=\frac{v}{\sqrt{1+|v|^2}}.
\]
It follows that
\[
\theta_1=\arccos\Bigl(\frac{1}{\sqrt{1+|v|^2}}\Bigr)\in[0,\frac{\pi}{2}),\qquad {\eta}(\hat{\theta})=\frac{v}{|v|},
\]
unless $v=0$, in which case ${\eta}(\hat{\theta})$ can be any vector in $S^{d-2}$.
Then, item (i) of Proposition~\ref{radonprop} gives that 
\begin{equation}
\label{relationpolaff}
\mathcal{R}^{\text{aff}}f(v,t)=\frac{1}{\sqrt{1+|v|^2}}\mathcal{R}^{\text{pol}}f\Bigl(\theta,\frac{t}{\sqrt{1+|v|^2}}\Bigr)
\end{equation}
and this is the relation that we need.

\section{Other proofs}\label{other-proofs}

\begin{proof}[Proof of Theorem~\ref{tfstg}]
For the reader's convenience we adapt the proof of \cite{helgason99} to our context.
Recall that  the map $\psi:\mathbb{R}^{d-1}\times(\mathbb{R}\setminus\{0\})\rightarrow\mathbb{R}^d$, defined by $\psi(v,\tau)=\tau n(v)$, is a diffeomorphism onto the open set $V=\{\xi\in\mathbb{R}^d:\xi_1\ne0\}$ with Jacobian $J\Phi(v,\tau)=\tau^{d-1}$. Thus, the Plancherel theorem and the Fourier slice theorem give that 
for any $f\in\mathcal{A}$
\begin{align*}
||f||^2_2
&=\int_V|\mathcal{F}f(\xi)|^2\ {\rm d}\xi\\
&=\int_{\mathbb{R}^{d-1}\times(\mathbb{R}\setminus\{0\})}|\mathcal{F}f(\tau n(v))|^2|\tau|^{d-1}\ {\rm d}v{\rm d}\tau\\
&=\int_{\mathbb{R}^{d-1}\times\mathbb{R}}|\mathcal{F}(\mathcal{R}^{\rm aff}f(v,\cdot))(\tau)|^2|\tau|^{d-1}\ {\rm d}v{\rm d}\tau\\
&=||\mathcal{R}^{\rm aff}f||^2_{\mathcal{D}}.
\end{align*}
Thus, $\mathcal{R}^{\rm aff}f$ belongs to $\mathcal{D}$ for all $f\in\mathcal{A}$ and $\mathcal{R}^{\rm aff}$ is an isometric operator from $\mathcal{A}$ into $\mathcal{D}$. We want to prove that $\mathcal{R}^\text{aff}:\mathcal{A}\rightarrow\mathcal{H}$ has dense image in $\mathcal{H}$. Since $\mathcal{H}$ is the completion of $\mathcal{D}$, it is enough to prove that $\mathcal{R}^\text{aff}$ has dense image in $\mathcal{D}$, that is (Ran$(\mathcal{R}^\text{aff})$)$^\perp=\{0\}$ in $\mathcal{D}$. Take then $\varphi\in\mathcal{D}$ such that  
$\langle\varphi,\mathcal{R}^\text{aff}f\rangle_{\mathcal{D}}=0$
for all $f\in\mathcal{A}$. By the definition of the scalar product on $\mathcal{D}$ and the Fourier  slice theorem we have that
\begin{align*}
\langle\varphi,\mathcal{R}^\text{aff}f\rangle_{\mathcal{D}}&=\int_{\mathbb{R}^{d-1}\times\mathbb{R}}|\tau|^{d-1}\mathcal{F}(\varphi(v,\cdot))(\tau)\overline{\mathcal{F}(\mathcal{R}^\text{aff}f(v,\cdot))(\tau)}\ {\rm d}v{\rm d}\tau\\
&=\int_{\mathbb{R}^{d-1}\times\mathbb{R}}|\tau|^{d-1}\mathcal{F}(\varphi(v,\cdot))(\tau)\overline{\mathcal{F}f(\tau n(v))}\ {\rm d}v{\rm d}\tau\\
&=\int_{\mathbb{R}^{d}}\mathcal{F}\Bigl(\varphi\bigl(\frac{\tilde{\xi}}{\xi_1},\cdot\bigr)\Bigr)(\xi_1)\overline{\mathcal{F}f(\xi)}{\rm d}\xi,
\end{align*}
where ${^t\xi}=(\xi_1,{^t\tilde{\xi}})$.  Therefore, if $\langle\varphi,\mathcal{R}^\text{aff}f\rangle_{\mathcal{D}}=0$
for all $f\in\mathcal{A}$, then 
\[
\mathcal{F}\Bigl(\varphi\bigl(\frac{\tilde{\xi}}{\xi_1},\cdot\bigr)\Bigr)(\xi_1)=0
\]
almost everywhere. However,
\[
\|\varphi\|_{\mathcal{D}}=\int_{\mathbb{R}^{d-1}\times\mathbb{R}}|\tau|^{d-1}|\mathcal{F}(\varphi(v,\cdot))(\tau)|^2\ {\rm d}v{\rm d}\tau
=\int_{\mathbb{R}^{d}}|\mathcal{F}\Bigl(\varphi\bigl(\frac{\tilde{\xi}}{\xi_1},\cdot\bigr)\Bigr)(\xi_1)|^2\ {\rm d}\xi
\]
and  hence $\varphi=0$ in $\mathcal{D}$. Therefore $\mathcal{R}^\text{aff}:\mathcal{A}\rightarrow\mathcal{H}$ has dense image in $\mathcal{H}$ and we can extend  it  to a unique unitary operator $\mathscr{R}$ from $L^2(\mathbb{R}^d)$ onto $\mathcal{H}$. Hence, $\cQ=\mathscr{I}\mathscr{R}$ is a unitary operator from $L^2(\mathbb{R}^d)$ onto $L^2(\mathbb{R}^{d-1}\times\mathbb{R})$.
\end{proof}

\begin{proof}[Proof of Proposition~\ref{fstg3}]
We start by observing that \eqref{fstg} is true if $f\in\mathcal{A}$, by the Fourier slice theorem and by the definition of $\cQ$. Take now  $f\in L^2(\mathbb{R}^d)$. By density there exists a sequence $(f_n)_n\in\mathcal{A}$ such that $f_n\rightarrow f$ in $L^2(\mathbb{R}^d)$. Since $\cQ$ is unitary from $L^2(\mathbb{R}^d)$ onto $L^2(\mathbb{R}^{d-1}\times\mathbb{R})$ and $\operatorname{I}\otimes\mathcal{F}$ is unitary from $L^2(\mathbb{R}^{d-1}\times\mathbb{R})$ into itself, where $\operatorname{I}$ is the identity operator, $(\operatorname{I}\otimes\mathcal{F})\cQ f_n\rightarrow(\operatorname{I}\otimes\mathcal{F})\cQ f$ in $L^2(\mathbb{R}^{d-1}\times\mathbb{R})$. 
Since $f_n\in\mathcal{A}$, for almost every $(v,\tau)\in\R^{d-1}\times\R$
\begin{align*}
(\operatorname{I}\otimes\mathcal{F})\cQ f_n(v,\tau)&=(\operatorname{I}\otimes\mathcal{F})\mathcal{I}\mathcal{R}^{\rm aff}f_n(v,\tau)\\
&=|\tau|^\frac{d-1}{2}(\operatorname{I}\otimes\mathcal{F})\mathcal{R}^{\rm aff}f_n(v,\tau)\\
&=|\tau|^\frac{d-1}{2}\mathcal{F}f_n(\tau n(v)).
\end{align*}
So that, passing to a subsequence if necessary, \[|\tau|^\frac{d-1}{2}\mathcal{F}f_n(\tau n(v))\rightarrow(\operatorname{I}\otimes\mathcal{F})\cQ f(v,\tau)\] for almost every $(v,\tau)\in\R^{d-1}\times\R$. Therefore for almost every $(v,\tau)\in\R^{d-1}\times\R$,
\begin{align*}
(\operatorname{I}\otimes\mathcal{F})\cQ f(v,\tau)=\lim_{n\rightarrow+\infty}|\tau|^\frac{d-1}{2}\mathcal{F}f_n(\tau n(v))=|\tau|^\frac{d-1}{2}\mathcal{F}f(\tau n(v)),
\end{align*}
where the last equality holds true using a subsequence if necessary.
\end{proof}


\end{document}